\documentclass[10pt,a4paper]{article}

\usepackage{amsfonts}
\usepackage{amssymb}
\usepackage{amsthm}
\usepackage[english]{babel}
\usepackage{hhline}
\usepackage[utf8]{inputenc}
\usepackage{amsmath}
\usepackage[all,2cell,ps]{xy}
\usepackage{qsymbols}
\usepackage{color}
\usepackage{epsfig}
\usepackage{color}
\usepackage{graphics}
\usepackage{graphicx}%
\usepackage{enumerate}
\usepackage{tikz}
\usepackage[bottom]{footmisc}
\usepackage{authblk}
\usepackage{multicol}

\numberwithin{equation}{section}

\usepackage[colorlinks,linkcolor=black,citecolor=blue,urlcolor=blue]{hyperref}
\theoremstyle{plain}

\newtheorem{theorem}{Theorem}
\newtheorem{lemma}[theorem]{Lemma}

\newtheorem{definition}[theorem]{Definition}

\newcommand{\RNA}{\mathsf{RNA}}
\newcommand{\ra}{\rightarrow}

\newcommand{\we}{\wedge}

\newcommand{\s}{\sim}

\title{An independent equational basis for the variety of reflexive Nelson algebras}

\author{Cornejo J.M.,  Helt P. S. and San Mart\'in H.J.}

\date{}

\begin{document}
	
\maketitle

%\section{NUEVOOOO}

\begin{abstract}
In this manuscript, we provide an independent equational basis for the variety of reflexive Nelson algebras, 
a generalization of the variety of SNA-algebras.
The proof of this result relies on a substantial number of technical arguments and computational examples.
The result complements a broader study of reflexive Nelson algebras by showing that the variety admits an 
independent equational axiomatization.
\end{abstract}

\section{Introduction}

In what follows, we present the definitions of reflexive Nelson algebras and R-algebras. 
The aim of this manuscript is to prove the following two results:
\begin{itemize}
\item The varieties of reflexive Nelson algebras and R-algebras coincide.
\item The set of identities given in Definition \ref{def_R}, which defines the variety of R-algebras, is independent. 
Therefore, it constitutes an independent equational basis for the variety.
\end{itemize}

\begin{definition} \label{def_R}
	An algebra $\langle T, \we, \vee, \ra, \s, 0, 1\rangle$ of type $(2, 2, 2, 1, 0, 0)$ is said to be an 
	\textit{$R$-algebra} if it satisfies the following identities:
	\begin{enumerate}
		\item[$\mathrm{(R1)}$] $x \wedge (y \vee z) \approx (z \wedge x) \vee (y \wedge x)$ \label{lattice2}
		\item[$\mathrm{(R2)}$] $x \approx x \we 1$ \label{top}
		\item[$\mathrm{(R3)}$] $0 \approx 0\we x$ \label{bottom}
		\item[$\mathrm{(R4)}$] $\sim \sim x \approx= x$, \label{dobleneg}
		\item[$\mathrm{(R5)}$] $\sim (x\we y) \approx \sim x \vee \sim y$, \label{negsup}
		\item[$\mathrm{(R6)}$] $(x\vee y)\ra z \approx (x\ra z)\we (y\ra z)$,\label{supremo}
		\item[$\mathrm{(R7)}$] $z\ra (x\we y) \approx (z\ra x)\we (z\ra y)$,\label{infimo}
		\item[$\mathrm{(R8)}$] $((x\ra y)\we (y\ra z))\ra (x\ra z) \approx 1$, \label{trans}
		\item[$\mathrm{(R9)}$] $x\we (x\ra y)\leq x\we (\s x\vee y)$, \label{inf}
		\item[$\mathrm{(R10)}$] $\s(x\ra y)\ra (x\we \s y)  \approx 1$. \label{au}
	\end{enumerate}
	We denote by $\mathcal R$ the variety of $R$-algebras.
\end{definition}

Recall that a \textit{Kleene algebra} \cite{Vig} is a bounded distributive lattice endowed with a unary operation $\sim$
which satisfies the following identities: 
\begin{enumerate}[\normalfont 1)]
\item $\sim \sim x \approx  x$,
\item $\sim (x\we y) \approx  \sim x \vee \sim y$,
\item $(x\we \sim x)\we (y \vee \sim y) \approx  x\we \sim x$.
\end{enumerate}

\begin{definition}\label{generalised}
	An algebra $\langle T, \we, \vee, \ra, \s, 0, 1\rangle$ of type $(2, 2, 2, 1, 0, 0)$ is said to be a
	\textit{reflexive Nelson algebra} (R-Nelson algebra for short) 
	if $\langle T, \wedge, \vee, \sim, 0, 1\rangle$ is a Kleene algebra 
	which satisfies the identities $\mathrm{(R6)}$,  $\mathrm{(R7)}$,  $\mathrm{(R8)}$, 
	$\mathrm{(R9)}$,  $\mathrm{(R10)}$ 
	of Definition \ref{def_R}, and the following two additional identities:
	\begin{enumerate}[\normalfont a)]
		\item $x\ra x \approx 1$, %\label{imp1}
		\item $(x\we \s y)\ra \s(x\ra y) \approx 1$. %\label{ultima}
	\end{enumerate}
	We denote by $\RNA$ the variety of R-Nelson algebras.
\end{definition}

An R-Nelson algebra is said to be an \textit{SNA-algebra} if this satisfies the additional identity 
$x\ra y\leq z\ra (x\ra y)$. The variety of SNA-álgebras were introduced and studied in \cite{LMSM} with the 
aim to generalize the well known relation 
between Heyting algebras and Nelson algebras in the framework of subresiduated lattices \cite{EH}.

\section{The varieties $\RNA$ and $\mathcal R$ coincide}

The following two lemmas involve computational complexity considerations. 

\begin{lemma} \label{lemaDistrib}
Let $\mathbf T = \langle T, \we, \vee, \ra, \s, 0, 1\rangle$ be an R-algebra. 
Then $\langle T, \we, \vee, 0, 1\rangle$ is a bounded distributive lattice.	
\end{lemma}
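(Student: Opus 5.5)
The plan is to reduce the statement to a known equational characterization of distributive lattices. Sholander showed that the two identities $x\we(x\vee y)\approx x$ and $x\we(y\vee z)\approx(z\we x)\vee(y\we x)$ already axiomatize distributive lattices. Identity $\mathrm{(R1)}$ is exactly Sholander's distributive identity. So the core task is to derive an absorption law from $\mathrm{(R1)}$–$\mathrm{(R5)}$. After that, boundedness follows from $\mathrm{(R2)}$ and $\mathrm{(R3)}$: once commutativity is known, $x\le 1$ and $0\le x$ in the induced order.

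The first step is to exploit the involution. From $\mathrm{(R4)}$ and $\mathrm{(R5)}$ I get $x\vee y\approx \s(\s x\we\s y)$, since $\s(\s x\we \s y)\approx \s\s x\vee\s\s y\approx x\vee y$. Thus $\vee$ is term-definable from $\we$ and $\s$, and every identity in $\we$ has a dual in $\vee$. Applying $\s$ to $\mathrm{(R2)}$ and $\mathrm{(R3)}$ gives $x\approx x\vee \s 1$ and $\s 0\approx \s 0\vee x$. Applying $\s$ to $\mathrm{(R1)}$ gives the dual distributive law $x\vee(y\we z)\approx (z\vee x)\we(y\vee x)$.

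The second step is to derive the basic lattice laws, in this order: idempotence, commutativity of $\we$, identification of the constants, absorption.
\begin{itemize}
\item For idempotence and commutativity, I substitute into $\mathrm{(R1)}$ values that make $y\vee z$ collapse. For example, $z:=\s 1$ gives $y\vee\s 1\approx y$, so $x\we y\approx(\s 1\we x)\vee(y\we x)$. Setting $x:=0$ in $\mathrm{(R1)}$ gives $0\approx (z\we 0)\vee(y\we 0)$.
\item Next I show $\s 1\approx 0$, by combining these with $\mathrm{(R3)}$ and the dual of $\mathrm{(R2)}$. This yields $\s1\we x\approx 0$, and hence $x\we y\approx 0\vee(y\we x)$.
\item Then I prove $0\vee u\approx u$; from the dual identities this should come as $0\vee u\approx u\vee \s 1$ after a commutativity step for $\vee$. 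This gives $x\we y\approx y\we x$.
\item Idempotence follows from $x\approx x\we 1\approx x\we(1\vee\s 1)$ expanded by $\mathrm{(R1)}$.
\item Absorption $x\we(x\vee y)\approx x$ follows from $\mathrm{(R1)}$: it gives $(y\we x)\vee x$, and I then use the dual distributive law together with $x\approx x\we 1$.
\end{itemize}
Associativity then follows from Sholander's theorem, or can be derived directly by the standard argument once absorption and distributivity are in hand.

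I expect the main obstacle to be the bootstrap in the second step: obtaining commutativity of $\we$ and $\vee$ before any absorption is available. Each substitution into $\mathrm{(R1)}$ produces terms with the arguments reversed. The paper's remark on computational complexity suggests that the intended derivation is a long machine-found chain (e.g.\ Prover9). My first attempt will be the substitutions above, which pin down $\s 1\approx 0$ and $\s 0\approx 1$. If those do not close, I will bring in the implication axioms as an extra source of equations, for instance $\mathrm{(R6)}$ with $x=y$ together with $\mathrm{(R8)}$. Failing that, I will accept a mechanically found derivation for this step.
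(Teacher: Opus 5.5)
\textbf{There is a genuine gap.} The heart of your plan cannot work: identities $\mathrm{(R1)}$--$\mathrm{(R5)}$ alone do not imply the lemma, so you cannot obtain $\sim 1\approx 0$, $0\vee u\approx u$ and absorption from them.

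Here is a countermodel. On $T=\{0,1\}$ put $x\wedge y=x\vee y=\min(x,y)$ and $\sim x=x$. Then:
\begin{itemize}
\item both sides of $\mathrm{(R1)}$ equal $\min(x,y,z)$;
\item $\min(x,1)=x$ and $\min(0,x)=0$, so $\mathrm{(R2)}$ and $\mathrm{(R3)}$ hold;
\item $\sim$ is an involution, and $\mathrm{(R5)}$ reads $\min(x,y)=\min(x,y)$.
\end{itemize}
Yet $\sim 1=1\neq 0$, $0\vee 1=0\neq 1$ and $1\wedge(1\vee 0)=0\neq 1$. So absorption fails and this is not a lattice.

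Setting $x\to y:=1$ for all $x,y$ also validates $\mathrm{(R6)}$, $\mathrm{(R7)}$, $\mathrm{(R8)}$ and $\mathrm{(R10)}$. This is precisely the two-element algebra the paper uses to show that $\mathrm{(R9)}$ is independent. Consequently every proof of the lemma must invoke $\mathrm{(R9)}$. Your fallback through $\mathrm{(R6)}$ with $x=y$ and $\mathrm{(R8)}$ cannot close the gap either, and ``accepting a mechanically found derivation'' is not an argument.

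Your bootstrap is also mis-ordered. You route commutativity through $\sim 1\approx 0$ and $0\vee u\approx u$, both of which fail in this model. Commutativity does hold there, and it has to be obtained first.

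\textbf{What the paper does.} From $\mathrm{(R1)}$, $\mathrm{(R2)}$, $\mathrm{(R4)}$, $\mathrm{(R5)}$ alone it derives, in order:
\begin{itemize}
\item $x\approx x\vee\sim 1$;
\item $1\wedge x\approx x$, via $1\wedge(x\wedge(y\vee z))\approx x\wedge(1\wedge(y\vee z))$ and $1\vee\sim 1=1$;
\item $x\vee y\approx 1\wedge(x\vee y)\approx y\vee x$;
\item commutativity of $\wedge$, by applying $\sim$.
\end{itemize}
Only then does it determine the constants, using the implication axioms:
\begin{itemize}
\item $\mathrm{(R9)}$ with $x=1$, $y=0$ gives $1\to 0\le\sim 1\vee 0=0$, so $1\to 0=0$;
\item $\mathrm{(R8)}$ with $(x,y,z)=(1,0,0)$ then gives $0\to 0=1$;
\item by $\mathrm{(R6)}$, $1=0\to 0=(0\vee\sim 1)\to 0=(0\to 0)\wedge(\sim 1\to 0)=\sim 1\to 0$;
\item $\mathrm{(R9)}$ with $x=\sim 1$, $y=0$ gives $\sim 1=\sim 1\wedge(\sim 1\to 0)\le\sim 1\wedge(1\vee 0)=0\vee\sim 1=0$.
\end{itemize}
With $\sim 1=0$ one gets $x\vee 1\approx 1$. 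Hence $x=x\wedge(1\vee y)=(y\wedge x)\vee x$ by $\mathrm{(R1)}$, and dualizing with $\sim$ yields $x\wedge(x\vee y)\approx x$. Sholander's theorem then finishes exactly as you intended.
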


\begin{proof}
	Let $a,b,c \in T$. Since
	%$$
	%\begin{array}{lcll}
	%\sim a	& = & \sim (a \wedge 1) & \mbox{by (R\ref{top})} \\
	%	& = & \sim a \vee \sim 1 & \mbox{by (R\ref{negsup})} 
	%\end{array}
	%$$
	\noindent $\sim a	$
	$\overset{  (R\ref{top}) 
	}{=}  \sim (a \wedge 1) $
	$\overset{  (R\ref{negsup}) 
	}{=}  \sim a \vee \sim 1 $ 
	then
	\begin{equation} \label{hipIdentidad10}
		\mathbf T \models \sim x \approx \sim x \vee \sim 1.
	\end{equation}
	Besides, since
	%$$
	%\begin{array}{lcll}
	%a	& = & \sim \sim a & \mbox{by (R\ref{dobleneg})} \\
	%	& = & \sim \sim a \vee \sim 1 & \mbox{by (\ref{hipIdentidad10})} \\
	%	& = & a \vee \sim 1 & \mbox{by (R\ref{dobleneg})} 
	%\end{array}
	%$$
	\noindent $a	$
	$\overset{  (R\ref{dobleneg}) 
	}{=}  \sim \sim a $
	$\overset{  (\ref{hipIdentidad10}) 
	}{=}  \sim \sim a \vee \sim 1 $
	$\overset{  (R\ref{dobleneg}) 
	}{=}  a \vee \sim 1 $ 
	then
	\begin{equation} \label{identidad10}
		\mathbf T \models x \approx x \vee \sim 1.
	\end{equation}
	The condition 
	\begin{equation} \label{identidad12}
		\mathbf T \models 1 \wedge (x \wedge (y \vee z)) \approx x \wedge (1 \wedge (y \vee z))
	\end{equation}
	holds because
	%$$
	%\begin{array}{lcll}
	%1 \wedge (a \wedge (b \vee c))	& = & 1 \wedge [(c \wedge a) \vee (b \wedge a)] & \mbox{by (R\ref{lattice2})} \\
	%	& = & [(b \wedge a) \wedge 1] \vee [(c \wedge a) \wedge 1]  & \mbox{by (R\ref{lattice2})} \\
	%	& = & (b \wedge a) \wedge (c \wedge a) & \mbox{by (R\ref{top})} \\
	%	& = & a \wedge (c \vee b) & \mbox{by (R\ref{lattice2})} \\
	%	& = & a \wedge ((c \wedge 1) \vee (b \wedge 1)) & \mbox{by (R\ref{top})} \\
	%	& = & a \wedge [1 \wedge (b \vee c)] & \mbox{by (R\ref{lattice2})}
	%\end{array}
	%$$
	\noindent $1 \wedge (a \wedge (b \vee c))	$
	$\overset{  (R\ref{lattice2}) 
	}{=}  1 \wedge [(c \wedge a) \vee (b \wedge a)] $
	$\overset{  (R\ref{lattice2}) 
	}{=}  [(b \wedge a) \wedge 1] \vee [(c \wedge a) \wedge 1]  $
	$\overset{  (R\ref{top}) 
	}{=}  (b \wedge a) \wedge (c \wedge a) $
	$\overset{  (R\ref{lattice2}) 
	}{=}  a \wedge (c \vee b) $
	$\overset{  (R\ref{top}) 
	}{=}  a \wedge ((c \wedge 1) \vee (b \wedge 1)) $
	$\overset{  (R\ref{lattice2})
	}{=}  a \wedge [1 \wedge (b \vee c)] $. 
	Also note that
	%$$
	%\begin{array}{lcll}
	%a	& = & a \wedge 1 & \mbox{by (R\ref{top})} \\
	%	& = & a \wedge (1 \wedge 1) & \mbox{by (R\ref{top})} \\
	%	& = & a \wedge (1 \wedge (1 \vee (\sim 1))) & \mbox{by (\ref{identidad10})} \\
	%	& = & 1 \wedge (a \wedge (1 \vee \sim 1)) & \mbox{by (\ref{identidad12})} \\
	%	& = & 1 \wedge (a \wedge 1) & \mbox{by (\ref{identidad10})} \\
	%	& = & 1 \wedge a & \mbox{by (R\ref{top})} 
	%\end{array}
	%$$
	\noindent $a	$
	$\overset{  (R\ref{top}) 
	}{=}  a \wedge 1 $
	$\overset{  (R\ref{top}) 
	}{=}  a \wedge (1 \wedge 1) $
	$\overset{  (\ref{identidad10}) 
	}{=}  a \wedge (1 \wedge (1 \vee (\sim 1))) $
	$\overset{  (\ref{identidad12}) 
	}{=}  1 \wedge (a \wedge (1 \vee \sim 1)) $
	$\overset{  (\ref{identidad10}) 
	}{=}  1 \wedge (a \wedge 1) $
	$\overset{  (R\ref{top}) 
	}{=}  1 \wedge a, $
	so
	\begin{equation} \label{identidad8}
		\mathbf T \models 1 \wedge x \approx x. 
	\end{equation}
	On other hand,
	%$$
	%\begin{array}{lcll}
	%a \vee b	& = & 1 \wedge (a \vee b) & \mbox{by (\ref{identidad8})} \\
	%	& = & (b \wedge 1) \vee (a \wedge 1) & \mbox{by (R\ref{lattice2})} \\
	%	& = & b \vee a & \mbox{by (R\ref{top})} 
	%\end{array}
	%$$
	\noindent $a \vee b	$
	$\overset{  (\ref{identidad8}) 
	}{=}  1 \wedge (a \vee b) $
	$\overset{  (R\ref{lattice2}) 
	}{=}  (b \wedge 1) \vee (a \wedge 1) $
	$\overset{  (R\ref{top}) 
	}{=}  b \vee a $.
	Then
	\begin{equation} \label{identidad5}
		\mathbf T \models x \vee y \approx y \vee x. 
	\end{equation}
	The identity (\ref{identidad5}) allows to show that
	\begin{equation} \label{identidad3}
		\mathbf T \models x \wedge y \approx y \wedge x
	\end{equation}
	such that
	%$$
	%\begin{array}{lcll}
	%a \wedge b	& = & \sim \sim (a \wedge b) & \mbox{by (R\ref{dobleneg})} \\
	%	& = & \sim (\sim a \vee \sim b) & \mbox{by (R\ref{negsup})} \\
	%	& = & \sim (\sim b \vee \sim a) & \mbox{by (\ref{identidad5})} \\
	%	& = & \sim \sim (b \wedge a)  & \mbox{by (R\ref{negsup})} \\
	%	& = & b \wedge a & \mbox{by (R\ref{dobleneg})} 
	%\end{array}
	%$$
	\noindent $a \wedge b	$
	$\overset{  (R\ref{dobleneg}) 
	}{=}  \sim \sim (a \wedge b) $
	$\overset{  (R\ref{negsup}) 
	}{=}  \sim (\sim a \vee \sim b) $
	$\overset{  (\ref{identidad5}) 
	}{=}  \sim (\sim b \vee \sim a) $
	$\overset{  (R\ref{negsup}) 
	}{=}  \sim \sim (b \wedge a)  $
	$\overset{  (R\ref{dobleneg}) 
	}{=}  b \wedge a $. 
	
	Note that
	%$$
	%\begin{array}{lcll}
	%1 \to 0	& = & 1 \wedge (1 \to 0) & \mbox{by (\ref{identidad8})} \\
	%	& = & [1 \wedge (1 \to 0)] \wedge [1 \wedge (\sim 1 \vee 0)] & \mbox{by (R\ref{inf})} \\
	%	& = & (1 \to 0) \wedge (\sim 1 \vee 0) & \mbox{by (\ref{identidad8})} \\
	%	& = & (1 \to 0) \wedge (0 \vee \sim 1) & \mbox{by (\ref{identidad5})} \\
	%	& = & (1 \to 0) \wedge 0 & \mbox{by (\ref{identidad10})} \\
	%	& = & 0 \wedge (1 \to 0) & \mbox{by (\ref{identidad3})} \\
	%	& = & 0 & \mbox{by (R\ref{bottom})}
	%\end{array}
	%$$
	\noindent $1 \to 0	$
	$\overset{  (\ref{identidad8}) 
	}{=}  1 \wedge (1 \to 0) $
	$\overset{  (R\ref{inf}) 
	}{=}  [1 \wedge (1 \to 0)] \wedge [1 \wedge (\sim 1 \vee 0)] $
	$\overset{  (\ref{identidad8}) 
	}{=}  (1 \to 0) \wedge (\sim 1 \vee 0) $
	$\overset{  (\ref{identidad5}) 
	}{=}  (1 \to 0) \wedge (0 \vee \sim 1) $
	$\overset{  (\ref{identidad10}) 
	}{=}  (1 \to 0) \wedge 0 $
	$\overset{  (\ref{identidad3}) 
	}{=}  0 \wedge (1 \to 0) $
	$\overset{  (R\ref{bottom})
	}{=}  0 $. 
	Hence,
	\begin{equation} \label{identidad13}
		1 \to 0 = 0.
	\end{equation}
	
	Taking into account that
	%$$
	%\begin{array}{lcll}
	%0 \to 0	& = & 0 \to (1 \to 0) & \mbox{by (\ref{identidad13})} \\
	%	& = & [0 \wedge (0 \to 0)] \to (1 \to 0)  & \mbox{by (R\ref{bottom})} \\
	%	& = & [(1 \to 0) \wedge (0 \to 0)] \to (1 \to 0)  & \mbox{by (\ref{identidad13})} \\
	%	& = & 1 & \mbox{by (R\ref{inf})} 
	%\end{array}
	%$$
	\noindent $0 \to 0	$
	$\overset{  (\ref{identidad13}) 
	}{=}  0 \to (1 \to 0) $
	$\overset{  (R\ref{bottom}) 
	}{=}  [0 \wedge (0 \to 0)] \to (1 \to 0)  $
	$\overset{  (\ref{identidad13}) 
	}{=}  [(1 \to 0) \wedge (0 \to 0)] \to (1 \to 0)  $
	$\overset{  (R\ref{inf}) 
	}{=}  1 $ 
	we get
	\begin{equation} \label{identidad9}
		0 \to 0 = 1.
	\end{equation}
	Also, taking into account that
	%$$
	%\begin{array}{lcll}
	%1 \wedge [(1 \wedge a) \wedge (1 \vee b)]	& = & 1 \wedge [[b \wedge (1 \wedge a)] \vee [1 \wedge (1 \wedge a)]] & \mbox{by (R\ref{lattice2})} \\
	%	& = & 1 \wedge [(b \wedge a) \vee (1 \wedge a)]  & \mbox{by (\ref{identidad8})} \\
	%	& = & [(1 \wedge a) \wedge 1] \vee [(b \wedge a) \wedge 1] & \mbox{by (R\ref{lattice2})} \\
	%	& = & [a \wedge 1] \vee [(b \wedge a) \wedge 1] & \mbox{by (\ref{identidad8})} \\
	%	& = & [a \wedge 1] \vee (b \wedge a) & \mbox{by (R\ref{top})} \\
	%	& = & [a \wedge 1] \vee (b \wedge (1 \wedge a)) & \mbox{by (\ref{identidad8})} \\
	%	& = & a \vee (b \wedge (1 \wedge a)) & \mbox{by (R\ref{top})} 
	%\end{array}
	%$$ 
	\noindent $1 \wedge [(1 \wedge a) \wedge (1 \vee b)]	$
	$\overset{  (R\ref{lattice2}) 
	}{=}  1 \wedge [[b \wedge (1 \wedge a)] \vee [1 \wedge (1 \wedge a)]] $
	$\overset{  (\ref{identidad8}) 
	}{=}  1 \wedge [(b \wedge a) \vee (1 \wedge a)]  $
	$\overset{  (R\ref{lattice2}) 
	}{=}  [(1 \wedge a) \wedge 1] \vee [(b \wedge a) \wedge 1] $
	$\overset{  (\ref{identidad8}) 
	}{=}  [a \wedge 1] \vee [(b \wedge a) \wedge 1] $
	$\overset{  (R\ref{top}) 
	}{=}  [a \wedge 1] \vee (b \wedge a) $
	$\overset{  (\ref{identidad8}) 
	}{=}  [a \wedge 1] \vee (b \wedge (1 \wedge a)) $
	$\overset{  (R\ref{top}) 
	}{=}  a \vee (b \wedge (1 \wedge a)) $ 
	we get
	\begin{equation} \label{identidad11}
		\mathbf T \models 1 \wedge [(1 \wedge x) \wedge (1 \vee y)] \approx x \vee (y \wedge (1 \wedge x))
	\end{equation}
	Since
	%$$
	%\begin{array}{lcll}
	%a \vee (b \wedge a)	& = & a \vee (b \wedge (1 \wedge a)) & \mbox{by (\ref{identidad8})} \\
	%	& = & 1 \wedge [(1 \wedge a) \wedge (1 \vee b)] & \mbox{by (\ref{identidad11})} \\
	%	& = & a \wedge (1 \vee b) & \mbox{by (\ref{identidad8})}
	%\end{array}
	%$$
	\noindent $a \vee (b \wedge a)	$
	$\overset{  (\ref{identidad8}) 
	}{=}  a \vee (b \wedge (1 \wedge a)) $
	$\overset{  (\ref{identidad11}) 
	}{=}  1 \wedge [(1 \wedge a) \wedge (1 \vee b)] $
	$\overset{  (\ref{identidad8})
	}{=}  a \wedge (1 \vee b) $ 
	then we have that
	\begin{equation} \label{identidad4}
		\mathbf T \models x \vee (y \wedge x) \approx x \wedge (1 \vee y)
	\end{equation}
	Note that
	%$$
	%\begin{array}{lcll}
	%(\sim 1) \wedge ((\sim 1) \to 0)	& = & [(\sim 1) \wedge ((\sim 1) \to 0)] \wedge [(\sim 1) \wedge ((\sim \sim 1) \vee 0)] & \mbox{by (R\ref{inf})} \\
	%	& = & [(\sim 1) \wedge ((\sim 1) \to 0)] \wedge [(\sim 1) \wedge (1 \vee 0)] & \mbox{by (R\ref{dobleneg})} \\
	%	& = & [(\sim 1) \wedge ((\sim 1) \to 0)] \wedge [(0 \wedge (\sim 1)) \vee (1 \wedge (\sim 1))] & \mbox{by (R\ref{lattice2})} \\
	%	& = & [(\sim 1) \wedge ((\sim 1) \to 0)] \wedge [0 \vee (1 \wedge (\sim 1))] & \mbox{by (R\ref{top})} \\
	%	& = & [(\sim 1) \wedge ((\sim 1) \to 0)] \wedge [0 \vee (\sim 1)] & \mbox{by (\ref{identidad8})} \\
	%	& = & [(\sim 1) \wedge ((\sim 1) \to 0)] \wedge 0 & \mbox{by (\ref{identidad10})} \\
	%	& = & 0 \wedge [(\sim 1) \wedge ((\sim 1) \to 0)] & \mbox{by (\ref{identidad3})} \\
	%	& = & 0 & \mbox{by (R\ref{bottom})} 
	%\end{array}
	%$$
	\noindent $(\sim 1) \wedge ((\sim 1) \to 0)	$
	$\overset{  (R\ref{inf}) 
	}{=}  [(\sim 1) \wedge ((\sim 1) \to 0)] \wedge [(\sim 1) \wedge ((\sim \sim 1) \vee 0)] $
	$\overset{  (R\ref{dobleneg}) 
	}{=}  [(\sim 1) \wedge ((\sim 1) \to 0)] \wedge [(\sim 1) \wedge (1 \vee 0)] $
	$\overset{  (R\ref{lattice2}) 
	}{=}  [(\sim 1) \wedge ((\sim 1) \to 0)] \wedge [(0 \wedge (\sim 1)) \vee (1 \wedge (\sim 1))] $
	$\overset{  (R\ref{top}) 
	}{=}  [(\sim 1) \wedge ((\sim 1) \to 0)] \wedge [0 \vee (1 \wedge (\sim 1))] $
	$\overset{  (\ref{identidad8}) 
	}{=}  [(\sim 1) \wedge ((\sim 1) \to 0)] \wedge [0 \vee (\sim 1)] $
	$\overset{  (\ref{identidad10}) 
	}{=}  [(\sim 1) \wedge ((\sim 1) \to 0)] \wedge 0 $
	$\overset{  (\ref{identidad3}) 
	}{=}  0 \wedge [(\sim 1) \wedge ((\sim 1) \to 0)] $
	$\overset{  (R\ref{bottom}) 
	}{=}  0, $
	so
	\begin{equation} \label{identidad6}
		(\sim 1) \wedge ((\sim 1) \to 0) = 0
	\end{equation}
	Also, since
	%$$
	%\begin{array}{lcll}
	%0	& = & (\sim 1) \wedge ((\sim 1) \to 0) & \mbox{by (\ref{identidad6})} \\
	%	& = & (\sim 1) \wedge [1 \wedge ((\sim 1) \to 0)] & \mbox{by (\ref{identidad8})} \\
	%	& = & (\sim 1) \wedge [(0 \to 0) \wedge ((\sim 1) \to 0)] & \mbox{by (\ref{identidad9})} \\
	%	& = & (\sim 1) \wedge [(0 \vee (\sim 1)) \to 0] & \mbox{by (R\ref{supremo})} \\
	%	& = & (\sim 1) \wedge [0 \to 0] & \mbox{by (\ref{identidad10})} \\
	%	& = & (\sim 1) \wedge 1 & \mbox{by (\ref{identidad9})} \\
	%	& = & \sim 1 & \mbox{by (R\ref{top})}
	%\end{array}
	%$$
	\noindent $0	$
	$\overset{  (\ref{identidad6}) 
	}{=}  (\sim 1) \wedge ((\sim 1) \to 0) $
	$\overset{  (\ref{identidad8}) 
	}{=}  (\sim 1) \wedge [1 \wedge ((\sim 1) \to 0)] $
	$\overset{  (\ref{identidad9}) 
	}{=}  (\sim 1) \wedge [(0 \to 0) \wedge ((\sim 1) \to 0)] $
	$\overset{  (R\ref{supremo}) 
	}{=}  (\sim 1) \wedge [(0 \vee (\sim 1)) \to 0] $
	$\overset{  (\ref{identidad10}) 
	}{=}  (\sim 1) \wedge [0 \to 0] $
	$\overset{  (\ref{identidad9}) 
	}{=}  (\sim 1) \wedge 1 $
	$\overset{  (R\ref{top})
	}{=}  \sim 1 $ 
	then
	\begin{equation} \label{identidad2}
		\sim 1  = 0.
	\end{equation}
	In consequence,
	%$$
	%\begin{array}{lcll}
	%a \vee 1	& = & \sim \sim  a \vee \sim \sim 1 & \mbox{by (R\ref{dobleneg})} \\
	%	& = & \sim (\sim a \wedge \sim 1) & \mbox{by (R\ref{negsup})} \\
	%	& = & \sim (\sim a \wedge 0) & \mbox{by (\ref{identidad2})} \\
	%	& = & \sim (0 \wedge \sim a) & \mbox{by (\ref{identidad3})} \\
	%	& = & \sim 0 & \mbox{by (R\ref{bottom})} \\
	%	& = & \sim \sim 1 & \mbox{by (\ref{identidad2})} \\
	%	& = & 1 & \mbox{by (R\ref{dobleneg})}
	%\end{array}
	%$$
	\noindent $a \vee 1	$
	$\overset{  (R\ref{dobleneg}) 
	}{=}  \sim \sim  a \vee \sim \sim 1 $
	$\overset{  (R\ref{negsup}) 
	}{=}  \sim (\sim a \wedge \sim 1) $
	$\overset{  (\ref{identidad2}) 
	}{=}  \sim (\sim a \wedge 0) $
	$\overset{  (\ref{identidad3}) 
	}{=}  \sim (0 \wedge \sim a) $
	$\overset{  (R\ref{bottom}) 
	}{=}  \sim 0 $
	$\overset{  (\ref{identidad2}) 
	}{=}  \sim \sim 1 $
	$\overset{  (R\ref{dobleneg})
	}{=}  1, $ 
	so
	\begin{equation} \label{hipIdentidad1}
		\mathbf T \models x \vee 1 \approx 1.
	\end{equation}
	Then
	%$$
	%\begin{array}{lcll}
	%a \vee (b \wedge a)	& = & a \wedge (1 \vee b) & \mbox{by (\ref{identidad4})} \\
	%	& = & a \wedge (b \vee 1) & \mbox{by (\ref{identidad5})} \\
	%	& = & a \wedge 1 & \mbox{by (\ref{hipIdentidad1})} \\
	%	& = & a & \mbox{by (R\ref{top})}
	%\end{array}
	%$$
	\noindent $a \vee (b \wedge a)	$
	$\overset{  (\ref{identidad4}) 
	}{=}  a \wedge (1 \vee b) $
	$\overset{  (\ref{identidad5}) 
	}{=}  a \wedge (b \vee 1) $
	$\overset{  (\ref{hipIdentidad1}) 
	}{=}  a \wedge 1 $
	$\overset{  (R\ref{top})
	}{=}  a $. 
	Thus,
	\begin{equation} \label{hip2Identidad1}
		\mathbf T \models  x \vee (y \wedge x) \approx x.
	\end{equation}
	Also, we have that
	%$$
	%\begin{array}{lcll}
	%a \wedge (a \vee b)	& = & \sim \sim [a \wedge (a \vee b)] & \mbox{by (R\ref{dobleneg})} \\
	%	& = & \sim [\sim a \vee \sim (a \vee b)] & \mbox{by (R\ref{negsup})} \\
	%	& = & \sim [\sim a \vee \sim (b \vee a)] & \mbox{by (\ref{identidad5})} \\
	%	& = & \sim [\sim a \vee \sim ((\sim \sim b) \vee (\sim \sim a))] & \mbox{by (R\ref{dobleneg})} \\
	%	& = & \sim [\sim a \vee \sim (\sim (\sim b \wedge \sim a))] & \mbox{by (R\ref{negsup})} \\
	%	& = & \sim [\sim a \vee (\sim b \wedge \sim a)] & \mbox{by (R\ref{dobleneg})} \\
	%	& = & \sim \sim a & \mbox{by (\ref{hip2Identidad1})} \\
	%	& = & a & \mbox{by (R\ref{dobleneg})} 
	%\end{array}
	%$$
	\noindent $a \wedge (a \vee b)	$
	$\overset{  (R\ref{dobleneg}) 
	}{=}  \sim \sim [a \wedge (a \vee b)] $
	$\overset{  (R\ref{negsup}) 
	}{=}  \sim [\sim a \vee \sim (a \vee b)] $
	$\overset{  (\ref{identidad5}) 
	}{=}  \sim [\sim a \vee \sim (b \vee a)] $
	$\overset{  (R\ref{dobleneg}) 
	}{=}  \sim [\sim a \vee \sim ((\sim \sim b) \vee (\sim \sim a))] $
	$\overset{  (R\ref{negsup}) 
	}{=}  \sim [\sim a \vee \sim (\sim (\sim b \wedge \sim a))] $
	$\overset{  (R\ref{dobleneg}) 
	}{=}  \sim [\sim a \vee (\sim b \wedge \sim a)] $
	$\overset{  (\ref{hip2Identidad1}) 
	}{=}  \sim \sim a $
	$\overset{  (R\ref{dobleneg}) 
	}{=}  a. $ 
	Therefore,
	\begin{equation} \label{identidad1}
		\mathbf T \models x \wedge (x \vee y) \approx x.
	\end{equation}
	
The identities  (R\ref{lattice2}) and (\ref{identidad1}) are precisely the axioms given by Sholander in \cite{sholander51postulates} for distributive lattices. In view of (R\ref{top}) and (R\ref{bottom}), $\mathbf T$ is also bounded.

	%@article {sholander51postulates,
		%	AUTHOR = {Sholander, Marlow},
		%	TITLE = {Postulates for distributive lattices},
		%	JOURNAL = {Canadian J. Math.},
		%	FJOURNAL = {Canadian Journal of Mathematics. Journal Canadien de Math\'ematiques},
		%	VOLUME = {3},
		%	YEAR = {1951},
		%	PAGES = {28--30},
		%	ISSN = {0008-414X},
		%	MRCLASS = {09.1X},
		%	MRNUMBER = {0038942 (12,472k)},
		%	MRREVIEWER = {P. M. Whitman},
		%}
	
\end{proof}

In what follows we will use that every R-algebra is a bounded distributive lattice.
In the framework of R-algebras we define $x^* = x\ra 0$ and the binary relation $\leq$ by
by $x \leq y$ if and only if $x = x \we y$ (or, in an equivalent way,
$x \leq y$ if and only if $y = x \vee y$).

\begin{lemma} \label{otherconditions}
Let $\mathbf T = \langle T, \we, \vee, \ra, \s, 0, 1\rangle$ be an R-algebra.
Then the following quasi-identities are satisfied:
	\begin{enumerate}[\normalfont a)]
		\item If $x \leq y$ then $\s x \geq \s y$. \label{190326_02}
		\item If $x \leq y$ then $z \ra x \leq z \ra y$.   \label{180326_01}
		\item If $x \leq y$ then $x \ra z \geq y \ra z$.  \label{190326_04}
		\item $1 \ra x \leq x$.   \label{190326_03}
		\item If $x \to y = 1$ and $y \to z = 1$ then $x \to z = 1$.   \label{250426_12}
		\item $x \ra x \approx 1$.  \label{190326_05}
		\item If $x \leq y$ then $x \ra y = 1$.  \label{300326_01}
		\item If $x \to y = 1$ and $\s y \to \s x = 1$ then $x \leq y$.  \label{210426_01}
		
		\item $x \wedge x^* \leq \sim x$.  \label{250426_10}
		\item  $\sim x \wedge x \approx \sim x \wedge (1 \to x)$.   \label{250426_08}

		\item $(x \wedge x^*) \to y \approx 1$.   \label{250426_05}

		\item $[(x \wedge y) \to x^*] \to [(x \wedge y) \to z] \approx 1$.   \label{250426_16}
		\item $(x \wedge \sim (x^* \to y)) \to z \approx 1$.   \label{250426_13} 
		\item $(x \wedge \sim x^*) \to y \approx x \to y$.    \label{250426_17} 
		\item $x \to (\sim x^*) \approx 1$.    \label{270426_01} 
		\item $\sim (x \to y) \leq (x \to y) \vee x$.   \label{290426_01}

		\item $x \wedge (x \to \sim x) \approx x \wedge \sim x$. 	\label{270426_11} 
		
		\item $(x \wedge \sim x) \to y \approx 1$.    \label{270426_16}
		\item $(x\we \sim x)\we (y \vee \sim y) \approx x\we \sim x$.   \label{270426_18}

		\item  $(x \wedge \s y) \ra \s (x \ra y) \approx 1$.   \label{190326_01}

	\end{enumerate}
\end{lemma}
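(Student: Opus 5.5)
The plan is to prove the items roughly in the order listed, using Lemma \ref{lemaDistrib} throughout so that we may calculate freely in a bounded distributive lattice. We also reuse the identities obtained in its proof, notably $\sim 1 = 0$, $\sim 0 = 1$, $1\ra 0 = 0$ and $0 \ra 0 = 1$. Where an item needs a later one, I will reorder; in particular (f) will be established before (e) and (g).

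\textbf{Monotonicity items.} For (a), if $x = x\we y$ then (R5) gives $\sim x = \sim x \vee \sim y$, that is, $\sim y \leq \sim x$. Items (b) and (c) follow in the same way: write $x = x\we y$ (respectively $y = x\vee y$) and apply (R7) (respectively (R6)). For (d), instantiate (R9) with $x:=1$ and use $\sim 1 = 0$; this gives $1\ra y = 1\we(1\ra y)\leq 1\we(0\vee y) = y$. Item (d) is the key tool for converting ``$1 \ra t = 1$'' into ``$t=1$'': if $1\ra t = 1$, then $1 \leq t$.

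\textbf{The central step, (f).} I expect (f) to be the main obstacle, since (R8) only yields statements of the form $\cdots \ra \cdots = 1$. My first attempt is to instantiate (R8) so that the antecedent collapses to something known to be $1$. Taking $x=y=z$ gives $(x\ra x)\ra(x\ra x)=1$. I would combine this with the antitonicity in (c) and the facts $0\ra 0 = 1$ and $0 \leq x$: by (c), $0\ra z \geq x \ra z$ for every $x$, so in particular $0\ra 0 = 1$ forces $0 \ra z$ to be large. I would also instantiate (R8) with $y:=0$ or $y:=1$, and (R10) with $y:=1$, which gives $\sim(x\ra 1)\ra 0 = 1$. The aim is to reach $1 \ra (x\ra x) = 1$ and then conclude with (d). If these instances do not suffice directly, I would first prove $x\ra 1 = 1$ and then use (b) to transport $0 \ra 0=1$ upward.

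\textbf{Consequences of (f).} Once (f) is available, (g) follows from (f) together with (b). For (e), the hypotheses make the antecedent of (R8) equal to $1\we 1 = 1$, so $1 \ra(x\ra z)=1$, and (d) gives $x\ra z = 1$. For (h), apply (R9) to $x\ra y=1$ to get $x\leq \sim x\vee y$, and similarly $\sim y\leq y\vee \sim x$. Combining these with distributivity should give $x \leq y$; the Kleene-type interaction needed here may require (s), so I would prove (s) first by deriving it from (i)/(j) and (R9), (R10).

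\textbf{The items involving $x^*$ and $\sim$.} These come from short instantiations. Item (i) is (R9) with $y:=0$. For (j), use (R9) with $x$ and $1\ra x$, together with (d). For (k), combine (i), (g) and (e)-style transitivity. Items (l) through (o) come from (R8), (R10) and (c) with suitable substitutions, for example (R10) with $y := \sim x^*$. Item (p) follows from (R10), (d) and lattice manipulation. For (q), use (R9) with $y:=\sim x$; for (r), use (i), (c) and (k). Item (s) follows from (r), (h) and (a). Finally, for (t), instantiate (R10) with suitable substitutions and combine it with (h) and the double negation law (R4).
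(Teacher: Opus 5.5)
Your treatment of (a)--(e), (g) and (i) matches the paper. There is a genuine gap, however, at (f), the step you yourself flag as central. The proposal never derives it, and the tools you list cannot derive it.

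Every one of those tools holds in the algebra the paper uses to show that (R10) is independent. These tools are: all instances of (R8), $0\to 0=1$ with $0\leq x$ and (b), (c), the identity $x\to 1=1$, and the instance $y:=1$ of (R10), i.e.\ $\sim(x\to 1)\to 0=1$. The algebra in question is the Kleene chain $0<b<1$ with $\sim b=b$, $u\to v=1$ if $u=0$ or $v=1$, and $u\to v=0$ otherwise. It satisfies (R1)--(R9), yet $b\to b=0$. Note also that (c) is antitone, so $0\to z\geq x\to z$ only bounds $x\to z$ from above. Likewise (b) moves only the consequent, turning $0\to 0=1$ into $0\to z=1$ and never into $x\to x=1$.

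The missing idea is to instantiate (R10) at $x:=1$, where it reads $\sim(1\to y)\to\sim y=1$. With $y:=\sim x$ this becomes $\sim(1\to\sim x)\to x=1$. By (d), $1\to\sim x\leq\sim x$, so (a) and (R4) give $x\leq\sim(1\to\sim x)$. Then (c) yields $1=\sim(1\to\sim x)\to x\leq x\to x$.

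The same instance drives the paper's proof of (j): via (R9) and (d) it gives $\sim(1\to x)\leq x\vee\sim x$. In fact (j), (q) and (r) all fail at $x=b$ in the model above. Your recipes for them use only (R9), (d), (i), (c) and (k), which all hold there, so they cannot work as written. For (r) the direction is also wrong. (i) gives $x\wedge x^*\leq x\wedge\sim x$, and the antitone (c) then only bounds $(x\wedge\sim x)\to y$ from above by $(x\wedge x^*)\to y=1$.

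There is also a circularity between (h) and (s). The inequalities $x\leq\sim x\vee y$ and $\sim y\leq y\vee\sim x$ do not give $x\leq y$ over De Morgan lattices. For instance, in the four-element De Morgan algebra with incomparable fixed points $a,b$, take $x=a$, $y=b$. So, as you suspect, you need the Kleene law (s) first. But you then derive (s) from (r), (h) and (a), and the alternative derivation ``from (i)/(j), (R9), (R10)'' is never supplied.

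The paper proves (h) without (s), by applying (R9) to a join:
\begin{itemize}
\item (g) and (b) give $\sim x\to(\sim x\vee x)=1$ and $\sim y\to(\sim x\vee x)\geq\sim y\to\sim x=1$.
\item By (R6), $(\sim x\vee\sim y)\to(\sim x\vee x)=1$, and then (R9) yields $\sim y\leq\sim x\vee x$.
\item Since $x\leq\sim x\vee y$ gives $x\wedge\sim y\leq\sim x$, distributivity gives $\sim y=(\sim y\wedge\sim x)\vee(\sim y\wedge x)\leq\sim x$, hence $x\leq y$.
\end{itemize}
In the paper, (h) then feeds (q), (q) feeds (r), and only then is (s) obtained.

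Finally, (k)--(p) and especially (t) are only gestured at (``suitable substitutions''). The paper's proof of (t) needs a long chain of auxiliary identities that your sketch does not anticipate, such as $x\to y\approx\sim x^*\to y$, $x\wedge\sim x\approx x\wedge x^*$ and $(x\wedge y)\to\sim y\approx(x\wedge y)^*$.
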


\begin{proof}
	\begin{itemize}
		\item[(\ref{190326_02})]	Since $x \leq y$ then $x \wedge y = x$,
		so $\s x \vee \s y = \s (x \wedge y)$ by (R\ref{negsup}). Then $\s x \vee \s y =  \s x$.

		\item[(\ref{180326_01})] Since $x \leq y$ then $x = x \we y$. Hence,
		%$$
		%\begin{array}{lcll}
		%z \ra x	& = & z \ra (x \wedge y) & \mbox{} \\
		%	& = & (z \ra x) \wedge (z \ra y) & \mbox{by } (R \ref{infimo}) \\
		%	& \leq  & z \ra y & \mbox{} 
		%\end{array}
		%$$	
		$z \ra x	$
		$\overset{  
		}{=}  z \ra (x \wedge y) $
		$\overset{  (R \ref{infimo}) 
		}{=}  (z \ra x) \wedge (z \ra y) $
		$\overset{  
		}{\leq}  z \ra y $
		
		\item[(\ref{190326_04})]
		%	
		%$$
		%\begin{array}{lcll}
		%y \ra z	& = & (x \vee y) \ra z & \mbox{} \\
		%	& = & (x \ra z) \wedge (y \ra z) & \mbox{by } (R \ref{supremo}) 
		%\end{array}
		%$$
		\noindent $y \ra z	$
		$\overset{  
		}{=}  (x \vee y) \ra z $
		$\overset{  (R \ref{supremo}) 
		}{=}  (x \ra z) \wedge (y \ra z) $
		
		\item[(\ref{190326_03})]	
		%$$
		%\begin{array}{lcll}
		%	1 \ra x	& = & 1 \wedge (1 \ra x) & \mbox{} \\
		%	& \leq & 1 \wedge (\s 1 \vee x) & \mbox{by (R\ref{inf})}  \\
		%	& = & x & \mbox{} 
		%\end{array}
		%$$
		\noindent $	1 \ra x	$
		$\overset{  
		}{=}  1 \wedge (1 \ra x) $
		$\overset{  (R\ref{inf})  
		}{\leq}  1 \wedge (\s 1 \vee x) $
		$\overset{  
		}{=}  x $

		\item[(\ref{250426_12})]
		
		%$$
		%\begin{array}{lcll}
		%1 \to (a \to c)	& = & (1 \wedge 1) \to (a \to c) & \mbox{} \\
		%	& = & ((a \to b) \wedge (b \to c)) \to (a \to c) & \mbox{} \\
		%	& = & 1 & \mbox{por (R\ref{trans})} 
		%\end{array}
		%$$
		\noindent $1 \to (a \to c)	$
		$\overset{  
		}{=}  (1 \wedge 1) \to (a \to c) $
		$\overset{  
		}{=}  ((a \to b) \wedge (b \to c)) \to (a \to c) $
		$\overset{ (R\ref{trans}) 
		}{=}  1 $.
		
		Thus, $a \to c = 1$, by item (\ref{190326_03}).

		\item[(\ref{190326_05})]
		
		Taking into account (\ref{190326_03}) we get $1 \ra (\s x) \leq \s x$. Then by item (\ref{190326_02}), $\s (1 \ra (\s x)) \geq (\s \s x$. Now, applying item (\ref{190326_04}), we obtain that $\s (1 \ra (\s x)) \ra (1 \wedge (\s \s x)) \leq (\s \s x) \ra (1 \wedge (\s \s x))$. By condition (R\ref{au}) we deduce that $1 \leq (\s \s x) \ra (1 \wedge (\s \s x))$. Thus, from (R\ref{dobleneg}), $x \ra x = 1$.

		\item[(\ref{300326_01})]
		
		%$$
		%\begin{array}{lcll}
		%x \ra y	& = & (x \ra y) \wedge 1 & \mbox{} \\
		%	& = & (x \ra y) \wedge (x \ra x)  & \mbox{by } (\ref{190326_05}) \\
		%	& = & x \ra (x \wedge y) & \mbox{by } (R \ref{infimo}) \\
		%	& = & x \ra x & \mbox{since } x \leq y \\
		%	& = & 1 & \mbox{by } (\ref{190326_05}) 
		%\end{array}
		%$$
		\noindent $x \ra y	$
		$\overset{  
		}{=}  (x \ra y) \wedge 1 $
		$\overset{  (\ref{190326_05}) 
		}{=}  (x \ra y) \wedge (x \ra x)  $
		$\overset{  (R \ref{infimo}) 
		}{=}  x \ra (x \wedge y) $
		$\overset{  x \leq y 
		}{=}  x \ra x $
		$\overset{  (\ref{190326_05}) 
		}{=}  1 $
		
		\item[(\ref{210426_01})] Since $\sim a \leq \sim a \vee a$ then by item (\ref{300326_01}) we have that
		\begin{equation} \label{210426_02}
			(\sim a) \to (\sim a \vee a) = 1.
		\end{equation}
		Also, by considering that that $\sim a \leq \sim a \vee a$ and item (\ref{180326_01}) we have that $1 = \sim b \to \sim a \leq \sim b \to (\sim a \vee a)$. Furthermore,
		\begin{equation} \label{210426_03}
			\sim b \to (\sim a \vee a) = 1.
		\end{equation}
		
		Then
		
		$$
		\begin{array}{lcll}
			\sim a \vee \sim b	& = & (\sim a \vee \sim b) \wedge ((\sim a) \to (\sim a \vee a)) \wedge (\sim b \to (\sim a \vee a) ) & \mbox{by (\ref{210426_02}) and (\ref{210426_03})} \\
			& = &  (\sim a \vee \sim b) \wedge [ (\sim a \vee \sim b) \to (\sim a \vee a) ] & \mbox{by (R\ref{supremo})} \\
			& \leq & (\sim a \vee \sim b) \wedge [ \sim (\sim a \vee \sim b) \vee (\sim a \vee a) ] & \mbox{by (R\ref{inf})} \\
			& = & (\sim a \vee \sim b) \wedge [ \sim \sim (a \wedge b) \vee (\sim a \vee a) ] & \mbox{by (R\ref{negsup})} \\
			& = & (\sim a \vee \sim b) \wedge [(a \wedge b) \vee (\sim a \vee a) ] & \mbox{by (R\ref{dobleneg})} \\
			& = & (\sim a \vee \sim b) \wedge (\sim a \vee a) & \mbox{since } a \wedge b \leq \sim a \vee a \\
			& = & \sim a \vee (\sim b \wedge a) & \mbox{} 
		\end{array}
		$$
		Then
		\begin{equation} \label{210426_04}
			\sim a \vee \sim b \leq \sim a \vee (\sim b \wedge a).
		\end{equation}
		Besides, recall that
		$$
		\begin{array}{lcll}
			a	& = & a \wedge 1 & \mbox{} \\
			& = & a \wedge (a \to b) & \mbox{} \\
			& \leq & a \wedge (\sim a \vee b) & \mbox{by (R\ref{inf})} \\
			& \leq & \sim a \vee b. & \mbox{} 
		\end{array}
		$$
		Then
		$$
		\begin{array}{lcll}
			\sim a	&  \geq & \sim (\sim a \vee b) & \mbox{by item (\ref{190326_02})} \\
			& = & \sim (\sim a \vee \sim \sim  b) & \mbox{by (R\ref{dobleneg})} \\
			& = & \sim \sim (a \wedge \sim b) & \mbox{by (R\ref{negsup})} \\
			& = & a \wedge \sim b. & \mbox{by (R\ref{dobleneg})} 
		\end{array}
		$$
		Hence,
		\begin{equation} \label{210426_05}
			\sim a \geq a \wedge \sim b.
		\end{equation}
		Moreover
		$$
		\begin{array}{lcll}
			\sim b	& \leq & \sim a \vee \sim b & \mbox{} \\
			& \leq  & \sim a \vee (\sim b \wedge a) & \mbox{by (\ref{210426_04})} \\
			& = & \sim a & \mbox{by (\ref{210426_05})} 
		\end{array}
		$$
		By item (\ref{190326_02}) we have that $\sim \sim b \geq \sim \sim a$. 
		Then, applying (R\ref{dobleneg}) we get $b \geq a$.
		
		\item[(\ref{250426_10})] 
		It follows from (R\ref{inf}) that $b \wedge b^* = b \wedge (b \to 0) \leq b \wedge \sim b \leq \sim b$.

		\item[(\ref{250426_08})]
		
		Note that
		$$
		\begin{array}{lcll}
			\sim (1 \to a)	& = & (\sim (1 \to a)) \wedge 1 & \mbox{} \\
			& = & \sim (1 \to a) \wedge (\sim (1 \to a)  \to (1 \wedge \sim a))  & \mbox{by (R\ref{au})} \\
			& \leq  & \sim (1 \to a) \wedge (\sim \sim (1 \to a)  \vee (1 \wedge \sim a))  & \mbox{by (R\ref{inf})} \\
			& = & \sim (1 \to a) \wedge ((1 \to a)  \vee (1 \wedge \sim a)) & \mbox{by (R\ref{dobleneg})} \\
			& = & \sim (1 \to a) \wedge ((1 \to a)  \vee \sim a) & \mbox{} \\
			& \leq & \sim (1 \to a) \wedge (a  \vee \sim a) & \mbox{by (\ref{190326_03})} \\
			& \leq & a  \vee \sim a. & \mbox{} 
		\end{array}
		$$
		Then the identity
		\begin{equation} \label{250426_07}
			\sim (1 \to x) \leq x  \vee \sim x
		\end{equation}
		is satisfied in $\mathbf T$. 
		Thus,
		%$$
		%\begin{array}{lcll}
		%	\sim a \wedge a	& = & \sim a \wedge a \wedge [(1 \to a) \vee \sim a] & \mbox{} \\
		%	& = & \sim a \wedge [(a \vee (1 \to a)) \wedge ((1 \to a) \vee \sim a)] & \mbox{por (\ref{190326_03})} \\
		%	& = & \sim a \wedge [(1 \to a) \vee (a \wedge \sim a)] & \mbox{} \\
		%	& = & \sim a \wedge \sim \sim [\sim \sim (1 \to a) \vee \sim \sim (a \wedge \sim a)] & \mbox{by (R\ref{dobleneg})} \\
		%	& = & \sim a \wedge \sim \sim \sim [\sim (1 \to a) \wedge \sim (a \wedge \sim a)] & \mbox{por (R\ref{negsup})} \\
		%	& = & \sim a \wedge \sim [\sim (1 \to a) \wedge \sim (a \wedge \sim a)] & \mbox{by (R\ref{dobleneg})} \\
		%	& = & \sim a \wedge \sim [\sim (1 \to a) \wedge (\sim a \vee \sim \sim a)] & \mbox{por (R\ref{negsup})} \\
		%	& = & \sim a \wedge \sim [\sim (1 \to a) \wedge (\sim a \vee a)] & \mbox{by (R\ref{dobleneg})} \\
		%	& = & \sim a \wedge \sim \sim (1 \to a) & \mbox{by (\ref{250426_07})} \\
		%	& = & \sim a \wedge (1 \to a) & \mbox{by (R\ref{dobleneg}).} 
		%\end{array}
		%$$
		\noindent $	\sim a \wedge a	$
		$\overset{  
		}{=}  \sim a \wedge a \wedge [(1 \to a) \vee \sim a] $
		$\overset{  (\ref{190326_03}) 
		}{=}  \sim a \wedge [(a \vee (1 \to a)) \wedge ((1 \to a) \vee \sim a)] $
		$\overset{  
		}{=}  \sim a \wedge [(1 \to a) \vee (a \wedge \sim a)] $
		$\overset{  (R\ref{dobleneg}) 
		}{=}  \sim a \wedge \sim \sim [\sim \sim (1 \to a) \vee \sim \sim (a \wedge \sim a)] $
		$\overset{  (R\ref{negsup}) 
		}{=}  \sim a \wedge \sim \sim \sim [\sim (1 \to a) \wedge \sim (a \wedge \sim a)] $
		$\overset{  (R\ref{dobleneg}) 
		}{=}  \sim a \wedge \sim [\sim (1 \to a) \wedge \sim (a \wedge \sim a)] $
		$\overset{  (R\ref{negsup}) 
		}{=}  \sim a \wedge \sim [\sim (1 \to a) \wedge (\sim a \vee \sim \sim a)] $
		$\overset{  (R\ref{dobleneg}) 
		}{=}  \sim a \wedge \sim [\sim (1 \to a) \wedge (\sim a \vee a)] $
		$\overset{  (\ref{250426_07}) 
		}{=}  \sim a \wedge \sim \sim (1 \to a) $
		$\overset{  (R\ref{dobleneg}). 
		}{=}  \sim a \wedge (1 \to a) $. 
		Thus, the identity
		$ %\label{250426_08}
		\sim x \wedge x \approx \sim x \wedge (1 \to x)
		$
		is satisfied in $\mathbf T$.
		
		\item[(\ref{250426_05})] 
		It follows from (R\ref{trans}) that $((1 \to a) \wedge (a \to 0)) \to (1 \to 0) = 1$. Notice that, using (\ref{190326_03}), we get $1 \to 0 = 0$. Then we obtain $((1 \to a) \wedge (a \to 0)) \to 0 = 1$. That is, $((1 \to a) \wedge a^*) \to 0 = 1$. Also, by item (\ref{300326_01}), we get $0 \to b =1$. Then
		$$
		\begin{array}{lcll}
			((1 \to a) \wedge a^*) \to b	& \geq  & 1 \to (((1 \to a) \wedge a^*) \to b) & \mbox{by (\ref{190326_03})} \\
			& \geq & (1 \wedge 1) \to (((1 \to a) \wedge a^*) \to b) & \mbox{} \\
			& \geq & ((((1 \to a) \wedge a^*) \to 0) \wedge (0 \to b)) \to (((1 \to a) \wedge a^*) \to b) & \mbox{} \\
			& \geq & 1 & \mbox{by (R\ref{trans}).} 
		\end{array}
		$$
		Hence, the identity
		\begin{equation} \label{250426_06}
			((1 \to x) \wedge x^*) \to y \approx 1
		\end{equation}
		is satisfied in $\mathbf T$.
		
		Then
		%$$
		%\begin{array}{lcll}
		%(a \wedge a^*) \to b	& = & (a \wedge a^* \wedge \sim a) \to b & \mbox{por (\ref{250426_10})} \\
		%	& = & ((1 \to a) \wedge a^* \wedge \sim a) \to b & \mbox{por (\ref{250426_08})} \\
		%	& \geq & 1 \to [((1 \to a) \wedge a^* \wedge \sim a) \to b] & \mbox{por (\ref{190326_03})} \\
		%	& = & (1 \wedge 1) \to [((1 \to a) \wedge a^* \wedge \sim a) \to b] & \mbox{} \\
		%	& = & [1 \wedge [((1 \to a) \wedge a^*) \to b]] \to [((1 \to a) \wedge a^* \wedge \sim a) \to b] & \mbox{por (\ref{250426_06})} \\
		%	& = & [[[(1 \to a) \wedge a^* \wedge \sim a] \to [(1 \to a) \wedge a^*]] \wedge [((1 \to a) \wedge a^*) \to b]] \to [((1 \to a) \wedge a^* \wedge \sim a) \to b] & \mbox{by (\ref{300326_01})} \\
		%	& = & 1 & \mbox{by (R\ref{inf})} 
		%\end{array}
		%$$
		%
		%$$
		%\begin{array}{cll}
		% & (a \wedge a^*) \to b &  \mbox{} \\
		%= & (a \wedge a^* \wedge \sim a) \to b &  \mbox{por (\ref{250426_10})} \\
		%= & ((1 \to a) \wedge a^* \wedge \sim a) \to b &  \mbox{por (\ref{250426_08})} \\
		%\geq & 1 \to [((1 \to a) \wedge a^* \wedge \sim a) \to b] &  \mbox{por (\ref{190326_03})} \\
		%= & (1 \wedge 1) \to [((1 \to a) \wedge a^* \wedge \sim a) \to b] &  \mbox{} \\
		%= & [1 \wedge [((1 \to a) \wedge a^*) \to b]] \to [((1 \to a) \wedge a^* \wedge \sim a) \to b] &  \mbox{por (\ref{250426_06})} \\
		%= & [[[(1 \to a) \wedge a^* \wedge \sim a] \to [(1 \to a) \wedge a^*]] \wedge [((1 \to a) \wedge a^*) \to b]] \to [((1 \to a) \wedge a^* \wedge \sim a) \to b] &  \mbox{by (\ref{300326_01})}  \\
		%= & 1 &  \mbox{by (R\ref{inf})} 
		%\end{array}
		%$$
		\noindent $	(a \wedge a^*) \to b	$
		$\overset{  (\ref{250426_10}) 
		}{=}  (a \wedge a^* \wedge \sim a) \to b $
		$\overset{  (\ref{250426_08}) 
		}{=}  ((1 \to a) \wedge a^* \wedge \sim a) \to b $
		$\overset{  (\ref{190326_03}) 
		}{\geq}  1 \to [((1 \to a) \wedge a^* \wedge \sim a) \to b] $
		$\overset{  
		}{=}  (1 \wedge 1) \to [((1 \to a) \wedge a^* \wedge \sim a) \to b] $
		$\overset{  (\ref{250426_06}) 
		}{=}  [1 \wedge [((1 \to a) \wedge a^*) \to b]] \to [((1 \to a) \wedge a^* \wedge \sim a) \to b] $
		$\overset{  (\ref{300326_01}) 
		}{=}  [[[(1 \to a) \wedge a^* \wedge \sim a] \to [(1 \to a) \wedge a^*]] \wedge [((1 \to a) \wedge a^*) \to b]] \to [((1 \to a) \wedge a^*
		\wedge \sim a) \to b] $
		$\overset{  (R\ref{inf}) 
		}{=}  1. $

		\item[(\ref{250426_16})]
		
		Note that
		%$$
		%\begin{array}{lcll}
		%	[(a \wedge b) \to a^*] \to [(a \wedge b) \to c]	& = & [((a \wedge b) \to a^*) \wedge 1] \to [(a \wedge b) \to c] & \mbox{} \\
		%	& = & [((a \wedge b) \to a^*) \wedge 1] \to [(a \wedge b) \to c] & \mbox{} \\
		%	& = & [((a \wedge b) \to a^*) \wedge ((a \wedge b) \to a)] \to [(a \wedge b) \to c] & \mbox{por (\ref{300326_01})} \\
		%	& = & [(a \wedge b) \to (a^* \wedge a)] \to [(a \wedge b) \to c] & \mbox{by  (R\ref{infimo})} \\
		%	& = & [((a \wedge b) \to (a^* \wedge a)) \wedge 1] \to [(a \wedge b) \to c] & \mbox{} \\
		%	& = & [((a \wedge b) \to (a^* \wedge a)) \wedge ((a^* \wedge a) \to c)] \to [(a \wedge b) \to c] & \mbox{by (\ref{250426_05})} \\
		%	& = & 1 & \mbox{by (R\ref{trans})} 
		%\end{array}
		%$$
		\noindent $	[(a \wedge b) \to a^*] \to [(a \wedge b) \to c]	$
		$\overset{  
		}{=}  [((a \wedge b) \to a^*) \wedge 1] \to [(a \wedge b) \to c] $
		$\overset{  
		}{=}  [((a \wedge b) \to a^*) \wedge 1] \to [(a \wedge b) \to c] $
		$\overset{  (\ref{300326_01}) 
		}{=}  [((a \wedge b) \to a^*) \wedge ((a \wedge b) \to a)] \to [(a \wedge b) \to c] $
		$\overset{   (R\ref{infimo}) 
		}{=}  [(a \wedge b) \to (a^* \wedge a)] \to [(a \wedge b) \to c] $
		$\overset{  
		}{=}  [((a \wedge b) \to (a^* \wedge a)) \wedge 1] \to [(a \wedge b) \to c] $
		$\overset{  (\ref{250426_05}) 
		}{=}  [((a \wedge b) \to (a^* \wedge a)) \wedge ((a^* \wedge a) \to c)] \to [(a \wedge b) \to c] $
		$\overset{  (R\ref{trans}) 
		}{=}  1. $ 
		Then the identity
		$$%\begin{equation} \label{250426_16}
		[(x \wedge y) \to x^*] \to [(x \wedge y) \to z] \approx 1
		$$%\end{equation}
		is satisfied in $\mathbf T$.

		%\item[(\ref{250426_14})] 

		\item[(\ref{250426_13})] 
		
		It follows from (R\ref{au}) that $\sim (a \to b) \to (a \wedge \sim b) = 1$. Also, by item  (\ref{300326_01}) we get $(a \wedge \sim b) \to a = 1$. Then, taking into account (\ref{250426_12}) we get $\sim (a \to b) \to a = 1$. Thus, the identity
		\begin{equation} \label{250426_14}  
			\sim (x \to y) \to x  \approx 1
		\end{equation}
		 is satisfied in $\mathbf T$.

		Recall that by (\ref{300326_01}) we have that $[a \wedge \sim (b \to c)] \to \sim (b \to c) = 1$. Consequently, by (\ref{250426_14}) and (\ref{250426_12}) we conclude that the identity
		\begin{equation} \label{250426_15}
			[x \wedge \sim (y \to z)] \to y \approx 1
		\end{equation}
		is satisfied in $\mathbf T$.
		%Siguiendo el siguiente cálculo
		%$$
		%\begin{array}{lcll}
		%[(a \wedge b) \to a^*] \to [(a \wedge b) \to c]	& = & [((a \wedge b) \to a^*) \wedge 1] \to [(a \wedge b) \to c] & \mbox{} \\
		%	& = & [((a \wedge b) \to a^*) \wedge 1] \to [(a \wedge b) \to c] & \mbox{} \\
		%	& = & [((a \wedge b) \to a^*) \wedge ((a \wedge b) \to a)] \to [(a \wedge b) \to c] & \mbox{por (\ref{300326_01})} \\
		%	& = & [(a \wedge b) \to (a^* \wedge a)] \to [(a \wedge b) \to c] & \mbox{by  (R\ref{infimo})} \\
		%	& = & [((a \wedge b) \to (a^* \wedge a)) \wedge 1] \to [(a \wedge b) \to c] & \mbox{} \\
		%	& = & [((a \wedge b) \to (a^* \wedge a)) \wedge ((a^* \wedge a) \to c)] \to [(a \wedge b) \to c] & \mbox{by (\ref{250426_05})} \\
		%	& = & 1 & \mbox{by (R\ref{trans})} 
		%\end{array}
		%$$
		%se concluye que el álgebra satisface la identidad
		%\begin{equation} \label{250426_16}
		%[(x \wedge y) \to x^*] \to [(x \wedge y) \to z] \approx 1.
		%\end{equation}
		Finally, note that
		%$$
		%\begin{array}{lcll}
		%1	& = & [(a \wedge \sim (a^* \to b)) \to a^*] \to [(a \wedge \sim (a^* \to b)) \to c] & \mbox{by (\ref{250426_16})} \\
		%	& = & 1 \to [(a \wedge \sim (a^* \to b)) \to c] & \mbox{by (\ref{250426_15})} \\
		%	& \leq  & (a \wedge \sim (a^* \to b)) \to c & \mbox{by (\ref{190326_03})} 
		%\end{array}
		%$$
		\noindent $1	$
		$\overset{  (\ref{250426_16}) 
		}{=}  [(a \wedge \sim (a^* \to b)) \to a^*] \to [(a \wedge \sim (a^* \to b)) \to c] $
		$\overset{  (\ref{250426_15}) 
		}{=}  1 \to [(a \wedge \sim (a^* \to b)) \to c] $
		$\overset{  (\ref{190326_03}) 
		}{\leq}  (a \wedge \sim (a^* \to b)) \to c $.
		
		\item[(\ref{250426_17})] 
		
		It follows from item (\ref{250426_10}) that 
		$b \wedge b^* =  \leq \sim b$.  Also, it follows from item (\ref{190326_02}) that
		\begin{equation} \label{250426_02}
		b \wedge \sim (b \wedge b^*) = b.
		\end{equation}
		Then
		%$$
		%\begin{array}{lcll}
		%	(a \wedge b) \to (\sim b \vee \sim b^*)	& = & (a \wedge b \wedge \sim (b \wedge b^*)) \to (\sim b \vee \sim b^*) & \mbox{by (\ref{250426_02})}  \\
		%	& = & (a \wedge b \wedge (\sim b \vee \sim b^*)) \to (\sim b \vee \sim b^*) & \mbox{by (R\ref{negsup})} \\
		%	& = & 1 & \mbox{by (\ref{300326_01})} 
		%\end{array}
		%$$
		\noindent $	(a \wedge b) \to (\sim b \vee \sim b^*)	$
		$\overset{  (\ref{250426_02})  
		}{=}  (a \wedge b \wedge \sim (b \wedge b^*)) \to (\sim b \vee \sim b^*) $
		$\overset{  (R\ref{negsup}) 
		}{=}  (a \wedge b \wedge (\sim b \vee \sim b^*)) \to (\sim b \vee \sim b^*) $
		$\overset{  (\ref{300326_01}) 
		}{=}  1 $. 
		
		Thus, the identity
		\begin{equation} \label{250426_03}
			(x \wedge y) \to (\sim y \vee \sim y^*) \approx 1
		\end{equation}
		is satisfied in $\mathbf T$.
		Also note that
		$$
		\begin{array}{ll}
			a \vee b \geq b	&  \mbox{} \\
			\sim (a \vee b) \leq \sim b	&  \mbox{by item (\ref{190326_02})} \\
			\sim (a \vee b) \vee \sim (a^* \wedge b^*) \leq \sim b	\vee \sim (a^* \wedge b^*) 	&  \mbox{} \\
			a \to (\sim (a \vee b) \vee \sim (a^* \wedge b^*)) \leq a \to (\sim b	\vee \sim (a^* \wedge b^*))	&  \mbox{by item (\ref{180326_01})} \\
			a \to (\sim (a \vee b) \vee \sim ((a \to 0) \wedge (b \to 0))) \leq a \to (\sim b	\vee \sim (a^* \wedge b^*))	&  \mbox{} \\
			a \to (\sim (a \vee b) \vee \sim ((a \vee b) \to 0)) \leq a \to (\sim b	\vee \sim (a^* \wedge b^*))	&  \mbox{by (R\ref{supremo})} \\
			a \to (\sim (a \vee b) \vee \sim (a \vee b)^*) \leq a \to (\sim b	\vee \sim (a^* \wedge b^*))	&  \mbox{} \\
			(a \wedge (a \vee b)) \to (\sim (a \vee b) \vee \sim (a \vee b)^*) \leq a \to (\sim b	\vee \sim (a^* \wedge b^*))	&  \mbox{} \\
			1 \leq a \to (\sim b	\vee \sim (a^* \wedge b^*))	&  \mbox{by (\ref{250426_03})} \\
			1 \leq a \to (\sim b	\vee \sim a^* \vee \sim b^*)	&  \mbox{by (R\ref{negsup}).} 
		\end{array}
		$$
		Hence, the identity
		\begin{equation} \label{250426_04}
			x \to (\sim y	\vee \sim x^* \vee \sim y^*) \approx 1.
		\end{equation}
		is satisfied in $\mathbf T$. 
		Taking into account (\ref{250426_04}) we get $a \to (\sim a^* \vee \sim a^{**}) = a \to (\sim a^* \vee \sim a^* \vee \sim a^{**}) = 1$.
		
		Then, the identity 
		\begin{equation} \label{250426_01}  
			x \to (\sim x^* \vee \sim x^{**}) \approx 1
		\end{equation}
		is satisfied in $\mathbf T$.

		Note that it is enough to show that $a \leq \sim a^* \vee \sim a^{**}$. 
		It follows from (\ref{250426_01}) that $$a \to (\sim a^* \vee \sim a^{**}) = 1. $$
		Besides,
		%$$
		%\begin{array}{lcll}
		%	[\sim (\sim a^* \vee \sim a^{**})] \to \sim a	& = & [\sim \sim (a^* \wedge a^{**})] \to \sim a & \mbox{by (R\ref{negsup})} \\
		%	& = & (a^* \wedge a^{**}) \to a & \mbox{by (R\ref{dobleneg})} \\
		%	& = & 1 & \mbox{by (\ref{250426_05})} 
		%\end{array}
		%$$
		\noindent $	[\sim (\sim a^* \vee \sim a^{**})] \to \sim a	$
		$\overset{  (R\ref{negsup}) 
		}{=}  [\sim \sim (a^* \wedge a^{**})] \to \sim a $
		$\overset{  (R\ref{dobleneg}) 
		}{=}  (a^* \wedge a^{**}) \to a $
		$\overset{  (\ref{250426_05}) 
		}{=}  1 $. 
		Then, by item (\ref{210426_01}), $a \leq \sim a^* \vee \sim a^{**}$.
		
		Hence, the identity
		\begin{equation} \label{250426_11}  
			x \wedge (\sim x^* \vee \sim x^{**}) \approx x
		\end{equation}
		is satisfied in $\mathbf T$.
		
		Then
		%$$
		%\begin{array}{lcll}
		%a \to b	& = & [a \wedge (\sim a^* \vee \sim a^{**})] \to b & \mbox{by (\ref{250426_11})} \\
		%	& = & [(a \wedge \sim a^*) \vee (a \wedge \sim a^{**})] \to b & \mbox{} \\
		%	& = & ((a \wedge \sim a^*) \to b) \wedge ((a \wedge \sim a^{**}) \to b) & \mbox{by (R\ref{supremo})} \\
		%	& = & ((a \wedge \sim a^*) \to b) \wedge 1& \mbox{by (\ref{250426_13})} \\
		%	& = & (a \wedge \sim a^*) \to b & \mbox{} 
		%\end{array}
		%$$
		\noindent $a \to b	$
		$\overset{  (\ref{250426_11}) 
		}{=}  [a \wedge (\sim a^* \vee \sim a^{**})] \to b $
		$\overset{  
		}{=}  [(a \wedge \sim a^*) \vee (a \wedge \sim a^{**})] \to b $
		$\overset{  (R\ref{supremo}) 
		}{=}  ((a \wedge \sim a^*) \to b) \wedge ((a \wedge \sim a^{**}) \to b) $
		$\overset{  (\ref{250426_13}) 
		}{=}  ((a \wedge \sim a^*) \to b) \wedge 1$
		$\overset{  
		}{=}  (a \wedge \sim a^*) \to b $.
		
		\item[(\ref{270426_01})] 
		$$
		\begin{array}{lcll}
			a \to (\sim a^*)	& = & (a \wedge \sim a^*) \to \sim a^* & \mbox{by (\ref{250426_17})} \\
			& = & 1 & \mbox{by (\ref{300326_01})} 
		\end{array}
		$$

		\item[(\ref{290426_01})] 
		
		By (\ref{250426_13}), $\sim (a \to b) \to a = 1$. Thus, 
		$$
		\begin{array}{lcll}
			\sim (a \to b)	& = & \sim (a \to b) \wedge 1 & \mbox{} \\
			& = & \sim (a \to b) \wedge (\sim (a \to b) \to a) & \mbox{} \\
			& \leq  & \sim (a \to b) \wedge (\sim \sim (a \to b) \vee a) & \mbox{by (R\ref{inf})} \\
			& = & \sim (a \to b) \wedge ((a \to b) \vee a) & \mbox{by (R\ref{dobleneg})} \\
			& \leq & (a \to b) \vee a & \mbox{} 
		\end{array}
		$$

		\item[(\ref{270426_11})] 
		Note that
		$$
		\begin{array}{lcll}
			a \wedge \sim a	& = & \sim \sim a \wedge \sim a & \mbox{by (R\ref{dobleneg})} \\
			& = & \sim \sim a \wedge (1 \to \sim a) & \mbox{by (\ref{250426_08})} \\
			& = &  a \wedge (1 \to \sim a) & \mbox{by (R\ref{dobleneg})} \\
			& \leq  & 1 \to \sim a & \mbox{} \\
			& = & 1 \wedge (1 \to \sim a) & \mbox{} \\
			& = & (a \to 1) \wedge (1 \to \sim a) & \mbox{by (\ref{300326_01})} 
		\end{array}
		$$
		It follows from (\ref{300326_01}) that
		$$%\begin{equation} \label{270426_12}
		(a \wedge \sim a) \to [(a \to 1) \wedge (1 \to \sim a)] = 1.
		$$%\end{equation}
		By (R\ref{trans}), $[(a \to 1) \wedge (1 \to \sim a)] \to (a \to \sim a) = 1$.
		Then by (\ref{250426_12}) we get
		\begin{equation} \label{270426_13}
			(a \wedge \sim a) \to (a \to \sim a) = 1.
		\end{equation}
		Applying (R\ref{au}) we get
		$1 = \sim (a \to \sim a) \to (a \wedge \sim \sim a) = \sim (a \to \sim a) \to  a$.
		As $a \leq  a \vee \sim a$, by (\ref{250426_12}) and (\ref{300326_01}),
		\begin{equation} \label{270426_14}
			\sim (a \to \sim a) \to (a \vee \sim a) = 1.
		\end{equation}
		Then
		%$$
		%\begin{array}{lcll}
		%1	& = & \sim (a \to \sim a) \to (a \vee \sim a) & \mbox{by (\ref{270426_14})} \\
		%	& = & \sim (a \to \sim a) \to (\sim \sim a \vee \sim a) & \mbox{by (R\ref{dobleneg})} \\
		%	& = & \sim (a \to \sim a) \to \sim (\sim a \wedge a) & \mbox{by (R\ref{negsup})} \\
		%	& = & \sim (a \to \sim a) \to (\sim a \wedge a) & \mbox{by (R\ref{dobleneg})} 
		%\end{array}
		%$$
		
		\noindent  $1	$
		$\overset{  (\ref{270426_14}) 
		}{=}  \sim (a \to \sim a) \to (a \vee \sim a) $
		$\overset{  (R\ref{dobleneg}) 
		}{=}  \sim (a \to \sim a) \to (\sim \sim a \vee \sim a) $
		$\overset{  (R\ref{negsup}) 
		}{=}  \sim (a \to \sim a) \to \sim (\sim a \wedge a) $.
		%$\overset{  (R\ref{dobleneg}) 
			%}{=}  \sim (a \to \sim a) \to (\sim a \wedge a) $.
		Hence,
		\begin{equation} \label{270426_15}
			\sim (a \to \sim a) \to \sim (\sim a \wedge a) = 1.
		\end{equation}
		
		Taking into account the equalities {\bf (\ref{270426_13})} and (\ref{270426_15}), and using item (\ref{210426_01}), we get $\sim a \wedge a \leq a \to \sim a$. Hence,
		$$\sim a \wedge a  \leq a \wedge (a \to \sim a).$$
		Besides,
		$$
		\begin{array}{lcll}
			a \wedge (a \to \sim a)	& \leq & a \wedge (\sim a \vee \sim a) & \mbox{by (R\ref{inf})} \\
			& = & a \wedge \sim a & \mbox{} 
		\end{array}
		$$
		Therefore, $a \wedge (a \to \sim a) =  a \wedge \sim a$.
		
		\item[(\ref{270426_16})]

		By item (\ref{290426_01}) we obtain that $\sim (a \to b) \leq (a \to b) \vee a$.
		Then
		$$
		\begin{array}{lcll}
			a \to b	& = & \sim \sim (a \to b) & \mbox{by (R\ref{dobleneg})} \\
			& \geq & \sim ((a \to b) \vee a) & \mbox{by (\ref{190326_02})} \\
			& = & \sim (\sim \sim (a \to b) \vee \sim \sim a) & \mbox{by (R\ref{dobleneg})} \\
			& = & \sim \sim ( \sim (a \to b) \wedge  \sim a) & \mbox{by (R\ref{negsup})} \\
			& = & \sim (a \to b) \wedge  \sim a & \mbox{by (R\ref{dobleneg})}
		\end{array}
		$$
		By (\ref{300326_01}), $(\sim (a \to b) \wedge  \sim a) \to (a \to b) = 1$.
		Thus,
		\begin{equation} \label{270426_10} 
			(\sim x \wedge \sim (x \to y)) \to (x \to y) \approx 1.
		\end{equation}

		Note that
		$$
		\begin{array}{lcll}
			a \wedge (a \to a^*)	& = & a \wedge (a \to a^*) \wedge (a \to a^*) & \mbox{} \\
			& \leq &  a \wedge (\sim a \vee a^*) \wedge (a \to a^*)  & \mbox{by (R\ref{inf})} \\
			& = & [a \wedge (a \to a^*) \wedge \sim a] \vee [a \wedge (a \to a^*) \wedge a^*] & \mbox{} \\
			& = & [a \wedge \sim a \wedge (a \to a^*)] \vee [a \wedge (a \to a^*) \wedge a^*] & \mbox{} \\
			& = & [(1 \to a) \wedge \sim a \wedge (a \to a^*)] \vee [a \wedge (a \to a^*) \wedge a^*] & \mbox{by (\ref{250426_08})} \\
			& \leq  & [(1 \to a) \wedge (a \to a^*)] \vee a^* & \mbox{} 
		\end{array}
		$$
		Then,
		\begin{equation} \label{270426_03}
			a \wedge (a \to a^*) \leq [(1 \to a) \wedge (a \to a^*)] \vee a^*
		\end{equation}

		Besides, by (R\ref{trans}) and item (\ref{300326_01}), we have that 
		$[(1 \to a) \wedge (a \to a^*)] \to (1 \to a^*) = 1$ and $(1 \to a^*) \to [(1 \to a^*) \vee a^*] = 1$. By (\ref{250426_12}), 
		$$%\begin{equation} \label{270426_04}
		[(1 \to a) \wedge (a \to a^*)] \to [(1 \to a^*) \vee a^*] = 1.
		$$%\end{equation}
		By (\ref{190326_03}), we get $(1 \to a^*) \vee a^* = a^*$. Thus,
		\begin{equation} \label{270426_05}
			[(1 \to a) \wedge (a \to a^*)] \to a^* = 1.
		\end{equation}
		Also
		%$$
		%\begin{array}{lcll}
		%	[[(1 \to a) \wedge (a \to a^*)] \vee a^*] \to a^*	& = & [[(1 \to a) \wedge (a \to a^*)] \to a^* ] \wedge [a^* \to a^*] & \mbox{by (R\ref{supremo})} \\
		%	& = & 1 \wedge [a^* \to a^*] & \mbox{by (\ref{270426_05})} \\
		%	& = & 1 & \mbox{by (\ref{190326_05})} 
		%\end{array}
		%$$
		\noindent $	[[(1 \to a) \wedge (a \to a^*)] \vee a^*] \to a^*	$
		$\overset{  (R\ref{supremo}) 
		}{=}  [[(1 \to a) \wedge (a \to a^*)] \to a^* ] \wedge [a^* \to a^*] $
		$\overset{  (\ref{270426_05}) 
		}{=}  1 \wedge [a^* \to a^*] $
		$\overset{  (\ref{190326_05}) 
		}{=}  1 $. 
		Now, from (\ref{270426_03}) and item (\ref{190326_04}), 
		$$%\begin{equation} \label{270426_06}
		[[(1 \to a) \wedge (a \to a^*)] \vee a^*] \to a^* \leq [a \wedge (a \to a^*)] \to a^*.
		$$%\end{equation}
		Then
		\begin{equation} \label{270426_07}
			[a \wedge (a \to a^*)] \to a^* = 1.
		\end{equation}
		Thus,
		$$
		\begin{array}{lcll}
			1	& = & [(a \wedge (a \to a^*)) \to a^*] \to [(a \wedge (a \to a^*)) \to b] & \mbox{by (\ref{250426_16})} \\
			& = & 1 \to [(a \wedge (a \to a^*)) \to b] & \mbox{by (\ref{270426_07})} \\
			& \leq & (a \wedge (a \to a^*)) \to b & \mbox{by (\ref{190326_03})} 
		\end{array}
		$$
		
		Furthermore,
		\begin{equation} \label{270426_02} 
			\mathbf T \models (x \wedge (x \to x^*)) \to y \approx 1.
		\end{equation}
		
		Notice that, by (R\ref{trans}) and item (\ref{300326_01}), we have that
		$[(a \to b) \wedge (b \to a^*)] \to (a \to a^*) = 1$ and $[a \wedge (a \to b) \wedge (b \to a^*)] \to [(a \to b) \wedge (b \to a^*)] = 1$. By (\ref{250426_12}), 
		$$%\begin{equation} \label{270426_04}
		[a \wedge (a \to b) \wedge (b \to a^*)] \to (a \to a^*) = 1.
		$$%\end{equation}
		From (\ref{300326_01}), $[a \wedge (a \to b) \wedge (b \to a^*)] \to a = 1$.
		Thus, by (R\ref{infimo}), we obtain that
		$[a \wedge (a \to b) \wedge (b \to a^*)] \to (a \wedge (a \to a^*)) = [[a \wedge (a \to b) \wedge (b \to a^*)] \to a] \wedge [[a \wedge (a \to b) \wedge (b \to a^*)] \to (a \to a^*)]  = 1$
		%$$
		%\begin{array}{lcll}
		%[a \wedge (a \to b) \wedge (b \to a^*)] \to (a \wedge (a \to a^*))	& = & [[a \wedge (a \to b) \wedge (b \to a^*)] \to a] \wedge [[a \wedge (a \to b) \wedge (b \to a^*)] \to (a \to a^*)] & \mbox{by (R\ref{infimo}) } \\
		%	& = &  & \mbox{} \\
		%	& = &  & \mbox{} \\
		%	& = &  & \mbox{} \\
		%	& = &  & \mbox{} \\
		%	& = &  & \mbox{} \\
		%	& = &  & \mbox{} \\
		%	& = &  & \mbox{} \\
		%	& = &  & \mbox{} \\
		%	& = &  & \mbox{} \\
		%	& = &  & \mbox{} \\
		%	& = &  & \mbox{} \\
		%	& = & &
		%\end{array}
		%$$
		Then
		\begin{equation} \label{270426_08}
			[a \wedge (a \to b) \wedge (b \to a^*)] \to (a \wedge (a \to a^*)) = 1.
		\end{equation}
		The equality
		\begin{equation} \label{270426_09}
			(a \wedge (a \to a^*)) \to c = 1.
		\end{equation}
		is verified by  (\ref{270426_02}).
		By (\ref{250426_12}),  $[a \wedge (a \to b) \wedge (b \to a^*)]  \to c = 1$.
		Then, the identity 
		\begin{equation} \label{270426_06} 
			(x \wedge (x \to y) \wedge (y \to x^*)) \to z \approx 1
		\end{equation}
		is satisfied.
		
		Recall that
		%$$
		%\begin{array}{lcll}
		%a \wedge \sim a \wedge (a \to \sim a^*)	& = & a \wedge (a \to \sim a) \wedge (a \to \sim a^*) & \mbox{by (\ref{270426_11})} \\
		%	& = & a \wedge [a \to (\sim a \wedge \sim a^*)] & \mbox{by (R\ref{infimo})} \\
		%	& = & a \wedge [a \to (\sim a \wedge \sim a^*)] \wedge 1 & \mbox{} \\
		%	& = & a \wedge [a \to (\sim a \wedge \sim a^*)] \wedge [(\sim a \wedge \sim a^*) \to a^*] & \mbox{by (\ref{270426_10})} 
		%\end{array}
		%$$
		\noindent $a \wedge \sim a \wedge (a \to \sim a^*)	$
		$\overset{  (\ref{270426_11}) 
		}{=}  a \wedge (a \to \sim a) \wedge (a \to \sim a^*) $
		$\overset{  (R\ref{infimo}) 
		}{=}  a \wedge [a \to (\sim a \wedge \sim a^*)] $
		$\overset{  
		}{=}  a \wedge [a \to (\sim a \wedge \sim a^*)] \wedge 1 $
		$\overset{  (\ref{270426_10}) 
		}{=}  a \wedge [a \to (\sim a \wedge \sim a^*)] \wedge [(\sim a \wedge \sim a^*) \to a^*] $. 
		Moreover
		\begin{equation} \label{270426_17}
			a \wedge \sim a \wedge (a \to \sim a^*) = a \wedge [a \to (\sim a \wedge \sim a^*)] \wedge [(\sim a \wedge \sim a^*) \to a^*].
		\end{equation}
		Then,
		%$$
		%\begin{array}{lcll}
		%1	& = & [a \wedge (a \to (\sim a \wedge \sim a^*)) \wedge ((\sim a \wedge \sim a^*) \to a^*)] \to b & \mbox{by (\ref{270426_06})} \\
		%	& = & [a \wedge \sim a \wedge (a \to \sim a^*)] \to b & \mbox{by (\ref{270426_17})} \\
		%	& = & [a \wedge \sim a \wedge 1] \to b & \mbox{by (\ref{270426_01})} \\
		%	& = & [a \wedge \sim a] \to b & \mbox{} 
		%\end{array}
		%$$
		\noindent $1	$
		$\overset{  (\ref{270426_06}) 
		}{=}  [a \wedge (a \to (\sim a \wedge \sim a^*)) \wedge ((\sim a \wedge \sim a^*) \to a^*)] \to b $
		$\overset{  (\ref{270426_17}) 
		}{=}  [a \wedge \sim a \wedge (a \to \sim a^*)] \to b $
		$\overset{  (\ref{270426_01}) 
		}{=}  [a \wedge \sim a \wedge 1] \to b $
		$\overset{  
		}{=}  [a \wedge \sim a] \to b $.
		
		\item[(\ref{270426_18})] 
		By item (\ref{270426_16}), 
		\begin{equation} \label{270426_19}
			(a \wedge \sim a) \to (b \vee \sim b) = 1.
		\end{equation}
		Also, 
		%$$
		%\begin{array}{lcll}
		%\sim (b \vee \sim b) \to \sim (a \wedge \sim a)	& = & \sim (\sim \sim b \vee \sim b) \to \sim (a \wedge \sim a)	 & \mbox{by (R\ref{dobleneg})} \\
		%	& = & \sim \sim ( \sim b \wedge  b) \to \sim (a \wedge \sim a) & \mbox{by (R\ref{negsup})} \\
		%	& = & ( \sim b \wedge  b) \to \sim (a \wedge \sim a) & \mbox{by (R\ref{dobleneg})} \\
		%	& = & 1 & \mbox{by (\ref{270426_16})} 
		%\end{array}
		%$$
		\noindent $\sim (b \vee \sim b) \to \sim (a \wedge \sim a)	$
		$\overset{  (R\ref{dobleneg}) 
		}{=}  \sim (\sim \sim b \vee \sim b) \to \sim (a \wedge \sim a)	 $
		$\overset{  (R\ref{negsup}) 
		}{=}  \sim \sim ( \sim b \wedge  b) \to \sim (a \wedge \sim a) $
		$\overset{  (R\ref{dobleneg}) 
		}{=}  ( \sim b \wedge  b) \to \sim (a \wedge \sim a) $
		$\overset{  (\ref{270426_16}) 
		}{=}  1 $
		and, hence,
		\begin{equation} \label{270426_20}
			\sim (b \vee \sim b) \to \sim (a \wedge \sim a)	 = 1.
		\end{equation}
		Taking into account (\ref{270426_19}) and (\ref{270426_20}) we get
		$a \wedge \sim a \leq b \vee \sim b$ by item (\ref{210426_01}).

		\item[(\ref{190326_01})] 
		
		%$$
		%\begin{array}{lcll}
		%(a \to a^*) \to (a \to b)	& = & (a \to a^*) \to ((a \wedge \sim a^*) \to b) & \mbox{by (\ref{250426_17})} \\
		%	& = & [(a \to a^*) \wedge 1] \to ((a \wedge \sim a^*) \to b) & \mbox{} \\
		%	& = & [(a \to a^*) \wedge (a \to \sim a^*)] \to ((a \wedge \sim a^*) \to b) & \mbox{by (\ref{270426_01})} \\
		%	& = & [a \to (a^* \wedge \sim a^*)] \to ((a \wedge \sim a^*) \to b) & \mbox{by (R\ref{infimo})} \\
		%	& = & [[a \to (a^* \wedge \sim a^*)] \wedge 1] \to ((a \wedge \sim a^*) \to b)  & \mbox{} \\
		%	& = & [[a \to (a^* \wedge \sim a^*)] \wedge [(a^* \wedge \sim a^*) \to b]] \to ((a \wedge \sim a^*) \to b) & \mbox{by (\ref{270426_16})} \\
		%	& = & 1 & \mbox{by (R\ref{trans})} 
		%\end{array}
		%$$
		
		%\noindent $(a \to a^*) \to (a \to b)	$
		%$\overset{  (\ref{250426_17}) 
			%}{=}  (a \to a^*) \to ((a \wedge \sim a^*) \to b) $
		%$\overset{  
			%}{=}  [(a \to a^*) \wedge 1] \to ((a \wedge \sim a^*) \to b) $
		%$\overset{  (\ref{270426_01}) 
			%}{=}  [(a \to a^*) \wedge (a \to \sim a^*)] \to ((a \wedge \sim a^*) \to b) $
		%$\overset{  (R\ref{infimo}) 
			%}{=}  [a \to (a^* \wedge \sim a^*)] \to ((a \wedge \sim a^*) \to b) $
		%$\overset{  
			%}{=}  [[a \to (a^* \wedge \sim a^*)] \wedge 1] \to ((a \wedge \sim a^*) \to b)  $
		%$\overset{  (\ref{270426_16}) 
			%}{=}  [[a \to (a^* \wedge \sim a^*)] \wedge [(a^* \wedge \sim a^*) \to b]] \to ((a \wedge \sim a^*) \to b) $
		%$\overset{  (R\ref{trans}) 
			%}{=}  1 $. 

		\noindent $(a \to a^*) \to (a \to b)	$
		$\overset{  (\ref{250426_17}) 
		}{=}  (a \to a^*) \to ((a \wedge \sim a^*) \to b) $
		$\overset{  
		}{=}  [(a \to a^*) \wedge 1] \to ((a \wedge \sim a^*) \to b) $
		$\overset{  (\ref{270426_01}) 
		}{=}  [(a \to a^*) \wedge (a \to \sim a^*)] \to ((a \wedge \sim a^*) \to b) $
		$\overset{  (R\ref{infimo}) 
		}{=}  [a \to (a^* \wedge \sim a^*)] \to ((a \wedge \sim a^*) \to b) $
		$\overset{  
		}{=}  [[a \to (a^* \wedge \sim a^*)] \wedge 1] \to ((a \wedge \sim a^*) \to b)  $
		$\overset{  (\ref{270426_16}) 
		}{=}  [[a \to (a^* \wedge \sim a^*)] \wedge [(a^* \wedge \sim a^*) \to b]] \to ((a \wedge \sim a^*) \to b) $.
		Thus,
		$%\begin{equation} \label{090626_01}
		(a \to a^*) \to (a \to b) = [[a \to (a^* \wedge \sim a^*)] \wedge [(a^* \wedge \sim a^*) \to b]] \to ((a \wedge \sim a^*) \to b).
		$%\end{equation}

		%Por (R\ref{trans}), $[[a \to (a^* \wedge \sim a^*)] \wedge [(a^* \wedge \sim a^*) \to b]] \to (a \to b) = 1$. 
		Since $a \wedge \sim a^* \leq a$ then $(a \wedge \sim a^*) \to b \geq a \to b$ by (\ref{190326_04}). It follows from (\ref{180326_01}) that $[[a \to (a^* \wedge \sim a^*)] \wedge [(a^* \wedge \sim a^*) \to b]] \to ((a \wedge \sim a^*) \to b) \geq [[a \to (a^* \wedge \sim a^*)] \wedge [(a^* \wedge \sim a^*) \to b]] \to (a \to b) = 1$ by (R\ref{trans}).

		Then
		\begin{equation} \label{290426_128}
			\mathbf T \models (x \to x^*) \to (x \to y) \approx 1.
		\end{equation}
		
		Also, 
		%$$
		%\begin{array}{lcll}
		%\sim (a \to b) \to c	& = & [(\sim (a \to b)) \wedge [(a \to b) \vee a]] \to c & \mbox{by (\ref{290426_01})} \\
		%	& = & [[\sim (a \to b) \wedge (a \to b)] \vee [\sim (a \to b) \wedge a]] \to c & \mbox{} \\
		%	& = & [[\sim (a \to b) \wedge (a \to b)] \to c] \wedge [[\sim (a \to b) \wedge a] \to c] & \mbox{by (R\ref{supremo})} \\
		%	& = & [1 \wedge [[\sim (a \to b) \wedge a] \to c] & \mbox{by (\ref{270426_16})} \\
		%	& = & [\sim (a \to b) \wedge a] \to c & \mbox{} 
		%\end{array}
		%$$
		\noindent $\sim (a \to b) \to c	$
		$\overset{  (\ref{290426_01}) 
		}{=}  [(\sim (a \to b)) \wedge [(a \to b) \vee a]] \to c $
		$\overset{  
		}{=}  [[\sim (a \to b) \wedge (a \to b)] \vee [\sim (a \to b) \wedge a]] \to c $
		$\overset{  (R\ref{supremo}) 
		}{=}  [[\sim (a \to b) \wedge (a \to b)] \to c] \wedge [[\sim (a \to b) \wedge a] \to c] $
		$\overset{  (\ref{270426_16}) 
		}{=}  [1 \wedge [[\sim (a \to b) \wedge a] \to c] $
		$\overset{  
		}{=}  [\sim (a \to b) \wedge a] \to c $. 
		Then the equality 
		\begin{equation} \label{290426_156}
			\sim (x \to y) \to z \approx [x \wedge \sim (x \to y)] \to z.
		\end{equation}
		is satisfied.
		
		Note that
		$$
		\begin{array}{lcll}
			a \to b	& = & (a \wedge \sim a^*) \to b & \mbox{by (\ref{250426_17})} \\
			& = & (\sim a^*) \to b & \mbox{by (\ref{290426_156}),} 
		\end{array}
		$$
		Then
		\begin{equation} \label{290426_157}
			\mathbf T \models x \to y \approx \sim x^* \to y.
		\end{equation}
		Also note that
		%$$
		%\begin{array}{lcll}
		%[a \wedge (a^* \vee b) \wedge c] \to b	& = & [(a \wedge c \wedge a^*) \vee (a \wedge c \wedge b)] \to b & \mbox{} \\
		%	& = & [(a \wedge c \wedge a^*) \to b] \wedge [(a \wedge c \wedge b) \to b] & \mbox{by (R\ref{supremo})} \\
		%	& = & [(a \wedge c \wedge a^*) \to b] \wedge 1 & \mbox{by (\ref{300326_01})} \\
		%	& = & (a \wedge c \wedge a^*) \to b & \mbox{} 
		%\end{array}
		%$$
		\noindent $[a \wedge (a^* \vee b) \wedge c] \to b	$
		$\overset{  
		}{=}  [(a \wedge c \wedge a^*) \vee (a \wedge c \wedge b)] \to b $
		$\overset{  (R\ref{supremo}) 
		}{=}  [(a \wedge c \wedge a^*) \to b] \wedge [(a \wedge c \wedge b) \to b] $
		$\overset{  (\ref{300326_01}) 
		}{=}  [(a \wedge c \wedge a^*) \to b] \wedge 1 $
		$\overset{  
		}{=}  (a \wedge c \wedge a^*) \to b $. 
		Also, by (\ref{300326_01}), we have that $(a \wedge c \wedge a^*) \to (a \wedge a^*) = 1$, and, by (\ref{250426_05}), we have that $(a \wedge a^*) \to b = 1$. Hence, applying (\ref{250426_12}), we have that
		$(a \wedge c \wedge a^*) \to  b = 1$. Then
		\begin{equation} \label{290426_158}
			\mathbf T \models [x \wedge (x^* \vee y) \wedge z] \to y \approx 1.
		\end{equation}
		Besides,
		%$$
		%\begin{array}{lcll}
		%a \vee \sim (\sim a \to a)	& = & (\sim \sim a) \vee \sim (\sim a \to a) & \mbox{by (R\ref{dobleneg})} \\
		%	& = & \sim (\sim a \wedge (\sim a \to a)) & \mbox{by (R\ref{negsup})} \\
		%	& = & \sim (\sim a \wedge (\sim a \to \sim \sim a)) & \mbox{by (R\ref{dobleneg})} \\
		%	& = & \sim (\sim a \wedge \sim \sim a) & \mbox{by (\ref{270426_11})} \\
		%	& = & \sim (\sim a \wedge a) & \mbox{by (R\ref{dobleneg})} \\
		%	& = & (\sim \sim a) \vee (\sim a) & \mbox{by (R\ref{negsup})} \\
		%	& = & a \vee (\sim a) & \mbox{by (R\ref{dobleneg}).} 
		%\end{array}
		%$$
		\noindent $a \vee \sim (\sim a \to a)	$
		$\overset{  (R\ref{dobleneg}) 
		}{=}  (\sim \sim a) \vee \sim (\sim a \to a) $
		$\overset{  (R\ref{negsup}) 
		}{=}  \sim (\sim a \wedge (\sim a \to a)) $
		$\overset{  (R\ref{dobleneg}) 
		}{=}  \sim (\sim a \wedge (\sim a \to \sim \sim a)) $
		$\overset{  (\ref{270426_11}) 
		}{=}  \sim (\sim a \wedge \sim \sim a) $
		$\overset{  (R\ref{dobleneg}) 
		}{=}  \sim (\sim a \wedge a) $
		$\overset{  (R\ref{negsup}) 
		}{=}  (\sim \sim a) \vee (\sim a) $
		$\overset{  (R\ref{dobleneg}) 
		}{=}  a \vee (\sim a) $.
		
		Then
		\begin{equation} \label{290426_150}
			\mathbf T \models x \vee \sim (\sim x \to x) \approx x \vee (\sim x).
		\end{equation}
		
		By (\ref{300326_01}) and (\ref{270426_01}), $(a \wedge c) \to a = 1$, $a \to (\sim a^*) = 1$ and $\sim a^* \to ((a \to b) \vee \sim a^*) = 1$. Then it follows from (\ref{250426_12}) that  
		\begin{equation} \label{290426_02}
			(a \wedge c) \to ((a \to b) \vee \sim a^*) = 1.
		\end{equation}

		By (\ref{300326_01}), $(\sim (a \to b) \wedge a^*) \to a^* = 1$ and $(\sim (a \to b) \wedge a^*) \to (\sim (a \to b)) = 1.$
		
		Besides, by axiom (R\ref{au}), $\sim (a \to b) \to (a \wedge \sim b) = 1$. Then by (\ref{250426_12}), $(\sim (a \to b) \wedge a^*) \to (a \wedge \sim b) = 1$. Thus,
		$(\sim (a \to b) \wedge a^*) \to (a^* \wedge a \wedge \sim b) = [(\sim (a \to b) \wedge a^*) \to a^*] \wedge [(\sim (a \to b) \wedge a^*) \to (a \wedge \sim b)] = 1 \wedge 1 = 1$ by  (R\ref{infimo}). Then
		\begin{equation} \label{290426_03}
			(\sim (a \to b) \wedge a^*) \to (a^* \wedge a \wedge \sim b) = 1.
		\end{equation}
		By (\ref{300326_01}), $(a^* \wedge a \wedge \sim b) \to (a^* \wedge a) = 1$. Also, $(a^* \wedge a) \to \sim (a \wedge c) = 1$ by item (\ref{250426_05}). Taking into account that (\ref{250426_12}) we get
		$(a^* \wedge a \wedge \sim b) \to  \sim (a \wedge c) = 1$. Then, by (\ref{290426_03}) and, again, by item (\ref{250426_12}), we conclude that
		\begin{equation} \label{290426_04}
			(\sim (a \to b) \wedge a^*) \to \sim (a \wedge c) = 1.
		\end{equation}
		Thus,
		%$$
		%\begin{array}{lcll}
		%\sim ((a \to b) \vee \sim a^*) \to \sim (a \wedge c)	& = & \sim (\sim \sim (a \to b) \vee \sim a^*) \to \sim (a \wedge c) & \mbox{by (R\ref{dobleneg})} \\
		%	& = & [\sim \sim  (\sim (a \to b) \wedge a^*)] \to \sim (a \wedge c) & \mbox{by (R\ref{negsup})} \\
		%	& = & (\sim (a \to b) \wedge a^*) \to \sim (a \wedge c) & \mbox{by (R\ref{dobleneg})} \\
		%	& = & 1 & \mbox{by (\ref{290426_04})} 
		%\end{array}
		%$$
		\noindent $\sim ((a \to b) \vee \sim a^*) \to \sim (a \wedge c)	$
		$\overset{  (R\ref{dobleneg}) 
		}{=}  \sim (\sim \sim (a \to b) \vee \sim a^*) \to \sim (a \wedge c) $
		$\overset{  (R\ref{negsup}) 
		}{=}  [\sim \sim  (\sim (a \to b) \wedge a^*)] \to \sim (a \wedge c) $
		$\overset{  (R\ref{dobleneg}) 
		}{=}  (\sim (a \to b) \wedge a^*) \to \sim (a \wedge c) $
		$\overset{  (\ref{290426_04}) 
		}{=}  1 $, 
		that is,  
		\begin{equation} \label{290426_05}
			\sim ((a \to b) \vee \sim a^*) \to \sim (a \wedge c) = 1.
		\end{equation}
		
		It follows from (\ref{290426_02}), (\ref{290426_05}) and (\ref{210426_01}) that $a \wedge c \leq (a \to b) \vee \sim a^*$.
		Hence, the identity
		\begin{equation} \label{290426_149}
			x \wedge z \wedge ((x \to y) \vee \sim x^*) \approx x \wedge z.
		\end{equation}
		is satisfied in $\mathbf T$.
		
	  Besides,
		%$$
		%\begin{array}{lcll}
		%1	& = & [b \wedge (b^* \vee (\sim (b \to b^*))) \wedge a] \to \sim (b \to b^*) & \mbox{by (\ref{290426_158})} \\
		%	& = & [b \wedge (b^* \vee (\sim (\sim b^* \to b^*))) \wedge a] \to \sim (b \to b^*) & \mbox{by (\ref{290426_157})} \\
		%	& = & [b \wedge (b^* \vee \sim b^*) \wedge a] \to \sim (b \to b^*) & \mbox{by (\ref{290426_150})} \\
		%	& = & [b \wedge ((b \to 0) \vee \sim b^*) \wedge a] \to \sim (b \to b^*) & \mbox{} \\
		%	& = & (b \wedge a) \to \sim (b \to b^*) & \mbox{by (\ref{290426_149})} 
		%\end{array}
		%$$
		\noindent $1	$
		$\overset{  (\ref{290426_158}) 
		}{=}  [b \wedge (b^* \vee (\sim (b \to b^*))) \wedge a] \to \sim (b \to b^*) $
		$\overset{  (\ref{290426_157}) 
		}{=}  [b \wedge (b^* \vee (\sim (\sim b^* \to b^*))) \wedge a] \to \sim (b \to b^*) $
		$\overset{  (\ref{290426_150}) 
		}{=}  [b \wedge (b^* \vee \sim b^*) \wedge a] \to \sim (b \to b^*) $
		$\overset{  
		}{=}  [b \wedge ((b \to 0) \vee \sim b^*) \wedge a] \to \sim (b \to b^*) $
		$\overset{  (\ref{290426_149}) 
		}{=}  (b \wedge a) \to \sim (b \to b^*) $. 
		Then, the identity
		\begin{equation} \label{290426_160}
			(x \wedge y) \to \sim (y \to y^*) \approx 1.
		\end{equation}
		is satisfied in $\mathbf T$.
		
		Since
		%$$
		%\begin{array}{lcll}
		%\sim a^* \to \sim (a \to a^*)	& = & a \to \sim (a \to a^*) & \mbox{by (\ref{290426_157})} \\
		%	& = & 1 & \mbox{by (\ref{290426_160})} 
		%\end{array}
		%$$
		\noindent $\sim a^* \to \sim (a \to a^*)	$
		$\overset{  (\ref{290426_157}) 
		}{=}  a \to \sim (a \to a^*) $
		$\overset{  (\ref{290426_160}) 
		}{=}  1 $ 
		and
		%$$
		%\begin{array}{lcll}
		%\sim \sim (a \to a^*) \to \sim \sim a^*	& = & (a \to a^*) \to a^* & \mbox{by (R\ref{dobleneg})} \\
		%	& = & 1 & \mbox{by (\ref{290426_128})} 
		%\end{array}
		%$$
		\noindent $\sim \sim (a \to a^*) \to \sim \sim a^*	$
		$\overset{  (R\ref{dobleneg}) 
		}{=}  (a \to a^*) \to a^* $
		$\overset{  (\ref{290426_128}) 
		}{=}  1 $ 
		then, by (\ref{210426_01}), the identity
		\begin{equation} \label{290426_163}
			\sim x^* \leq \sim (x \to x^*)
		\end{equation}
		is satisfied in $\mathbf T$.
		
		On the other hand, since $0 \leq a^*$ then $a \to 0 \leq a \to a^*$ by (\ref{180326_01}). This implies that $\sim (a \to a^*) \leq \sim a^*$ by (\ref{190326_02}). Then, by identity (\ref{290426_163}),
		$%\begin{equation} \label{290426_168}
		.\sim x^* \approx \sim (x \to x^*).
		$%\end{equation}
		Furthermore, by (R\ref{dobleneg}), 
		\begin{equation} \label{290426_169}
			x^* \approx x \to x^*
		\end{equation}
		verifies.
		
		The identity 
		$%\begin{equation} \label{290426_170}
		(x \wedge y) \to x^* \approx (x \wedge y)^*
		$%\end{equation}
		is satisfied since
		%$$
		%\begin{array}{lcll}
		%(a \wedge b) \to a^*	& = & (a \wedge b) \to (a \to 0) & \mbox{} \\
		%	& = & (a \wedge b) \to ((a \vee (a \wedge b)) \to 0) & \mbox{} \\
		%	& = & (a \wedge b) \to [(a \to 0) \wedge ((a \wedge b) \to 0)] & \mbox{by (R\ref{supremo})} \\
		%	& = & [(a \wedge b) \to (a \to 0)] \wedge [(a \wedge b) \to ((a \wedge b) \to 0)] & \mbox{by (R\ref{infimo})} \\
		%	& = & [(a \wedge b) \to (a \to 0)] \wedge [(a \wedge b) \to 0] & \mbox{by (\ref{290426_169})} \\
		%	& = & (a \wedge b) \to [(a \to 0) \wedge 0] & \mbox{by (R\ref{infimo})} \\
		%	& = & (a \wedge b) \to 0 & \mbox{} \\
		%	& = & (a \wedge b)^*. & \mbox{} 
		%\end{array}
		%$$
		\noindent $(a \wedge b) \to a^*	$
		$\overset{  
		}{=}  (a \wedge b) \to (a \to 0) $
		$\overset{  
		}{=}  (a \wedge b) \to ((a \vee (a \wedge b)) \to 0) $
		$\overset{  (R\ref{supremo}) 
		}{=}  (a \wedge b) \to [(a \to 0) \wedge ((a \wedge b) \to 0)] $
		$\overset{  (R\ref{infimo}) 
		}{=}  [(a \wedge b) \to (a \to 0)] \wedge [(a \wedge b) \to ((a \wedge b) \to 0)] $
		$\overset{  (\ref{290426_169}) 
		}{=}  [(a \wedge b) \to (a \to 0)] \wedge [(a \wedge b) \to 0] $
		$\overset{  (R\ref{infimo}) 
		}{=}  (a \wedge b) \to [(a \to 0) \wedge 0] $
		$\overset{  
		}{=}  (a \wedge b) \to 0 $
		$\overset{  
		}{=}  (a \wedge b)^*$
		and, as consequence, 
		\begin{equation} \label{290426_173}
			\mathbf T \models	(x \wedge y) \to y^* \approx (x \wedge y)^*.
		\end{equation}
		
		As
		$$
		\begin{array}{lcll}
			a \wedge \sim 0	& = & a & \mbox{since } \sim 0 = 1 \\
			& \leq & \sim a \vee a & \mbox{} \\
			& = & \sim a \vee \sim \sim a & \mbox{} \\
			& = & \sim (a \wedge \sim a) & \mbox{by (R\ref{negsup})} 
		\end{array}
		$$
		then by item (\ref{300326_01}), $(a \wedge \sim 0) \to \sim (a \wedge \sim a) = 1$. Also by  (R\ref{au}), $\sim a^* \to (a \wedge \sim 0) = 1$. Moreover $\sim a^* \to \sim (a \wedge \sim a) = 1$ by item (\ref{250426_12}). Then
		\begin{equation} \label{300426_01}
			\mathbf T \models	\sim x^* \to \sim (x \wedge \sim x) \approx 1.
		\end{equation}

		Besides,
		%$$
		%\begin{array}{lcll}
		%\sim (a \wedge a^*) \to \sim (a \wedge \sim a)	& = & (\sim a \vee \sim a^*) \to \sim (a \wedge \sim a) & \mbox{by (R\ref{negsup})} \\
		%	& = & [\sim a \to \sim (a \wedge \sim a)] \wedge [\sim a^* \to \sim (a \wedge \sim a)] & \mbox{by (R\ref{supremo})} \\
		%	& = & 1 \wedge  [\sim a^* \to \sim (a \wedge \sim a)] & \mbox{by (\ref{300326_01}) and  (\ref{190326_02})} \\
		%	& = & \sim a^* \to \sim (a \wedge \sim a) & \mbox{} \\
		%   & = & 1 & \mbox{by (\ref{300426_01})} 
		%\end{array}
		%$$
		\noindent $\sim (a \wedge a^*) \to \sim (a \wedge \sim a)	$
		$\overset{  (R\ref{negsup}) 
		}{=}  (\sim a \vee \sim a^*) \to \sim (a \wedge \sim a) $
		$\overset{  (R\ref{supremo}) 
		}{=}  [\sim a \to \sim (a \wedge \sim a)] \wedge [\sim a^* \to \sim (a \wedge \sim a)] $
		$\overset{  (\ref{300326_01}) and  (\ref{190326_02}) 
		}{=}  1 \wedge  [\sim a^* \to \sim (a \wedge \sim a)] $
		$\overset{  
		}{=}  \sim a^* \to \sim (a \wedge \sim a) $
		$\overset{  (\ref{300426_01}) 
		}{=}  1. $ 
		Then $\sim (a \wedge a^*) \to \sim (a \wedge \sim a) = 1$. Also, by (\ref{270426_16}),  $(a \wedge \sim a) \to (a \wedge a^*) = 1$. Applying item (\ref{210426_01}), we have that $$a \wedge \sim a \leq a \wedge a^*.$$
		By (\ref{250426_10}), $a \wedge a^* \leq \sim a$, so $$a \wedge a^* \leq a \wedge \sim a.$$
		Then
		\begin{equation} \label{290426_140}
			\mathbf T \models	x \wedge \sim x \approx x \wedge x^*.
		\end{equation}
		
		Besides,
		\begin{equation} \label{290426_141}
			(x \to \sim y) \wedge (x \to (\sim y)^*) \approx (x \to  y) \wedge (x \to  y^*)	.
		\end{equation}
		is satisfied in $\mathbf T$ because
		%$$
		%\begin{array}{lcll}
		%(a \to \sim b) \wedge (a \to (\sim b)^*)	& = & a \to (\sim b \wedge (\sim b)^*) & \mbox{by (R\ref{infimo})} \\
		%	& = & a \to (\sim b \wedge (\sim \sim b)) & \mbox{by (\ref{290426_140})} \\
		%	& = & a \to (\sim b \wedge b) & \mbox{by (R\ref{dobleneg})} \\
		%	& = & a \to (b \wedge b^*) & \mbox{by (\ref{290426_140})} \\
		%	& = & (a \to  b) \wedge (a \to  b^*) & \mbox{by (R\ref{infimo})} 
		%\end{array}
		%$$ 
		\noindent $(a \to \sim b) \wedge (a \to (\sim b)^*)	$
		$\overset{  (R\ref{infimo}) 
		}{=}  a \to (\sim b \wedge (\sim b)^*) $
		$\overset{  (\ref{290426_140}) 
		}{=}  a \to (\sim b \wedge (\sim \sim b)) $
		$\overset{  (R\ref{dobleneg}) 
		}{=}  a \to (\sim b \wedge b) $
		$\overset{  (\ref{290426_140}) 
		}{=}  a \to (b \wedge b^*) $
		$\overset{  (R\ref{infimo}) 
		}{=}  (a \to  b) \wedge (a \to  b^*) $.
		
		Since
		%$$
		%\begin{array}{lcll}
		%(a \wedge b) \to (\sim b)	& = & [(a \wedge b) \to (\sim b)] \wedge 1 & \mbox{} \\
		%	& = & [(a \wedge b) \to (\sim b)] \wedge [(a \wedge b) \to b] & \mbox{by (\ref{300326_01})} \\
		%	& = & (a \wedge b) \to ((\sim b) \wedge b) & \mbox{by (R\ref{infimo})} \\
		%	& = & (a \wedge b) \to ((\sim b) \wedge (\sim \sim b)) & \mbox{by (R\ref{dobleneg})} \\
		%	& = & (a \wedge b) \to ((\sim b) \wedge (\sim b)^*) & \mbox{by (\ref{290426_140})} \\
		%	& = & [(a \wedge b) \to (\sim b)] \wedge [(a \wedge b) \to (\sim b)^*] & \mbox{by (R\ref{infimo})} \\
		%	& = & [(a \wedge b) \to b] \wedge [(a \wedge b) \to b^*] & \mbox{by (\ref{290426_141})} \\
		%	& = & 1 \wedge [(a \wedge b) \to b^*] & \mbox{by (\ref{300326_01})} \\
		%	& = & (a \wedge b) \to b^* & \mbox{} \\
		%	& = & (a \wedge b)^* & \mbox{by (\ref{290426_173})} 
		%\end{array}
		%$$
		\noindent $(a \wedge b) \to (\sim b)	$
		$\overset{  
		}{=}  [(a \wedge b) \to (\sim b)] \wedge 1 $
		$\overset{  (\ref{300326_01}) 
		}{=}  [(a \wedge b) \to (\sim b)] \wedge [(a \wedge b) \to b] $
		$\overset{  (R\ref{infimo}) 
		}{=}  (a \wedge b) \to ((\sim b) \wedge b) $
		$\overset{  (R\ref{dobleneg}) 
		}{=}  (a \wedge b) \to ((\sim b) \wedge (\sim \sim b)) $
		$\overset{  (\ref{290426_140}) 
		}{=}  (a \wedge b) \to ((\sim b) \wedge (\sim b)^*) $
		$\overset{  (R\ref{infimo}) 
		}{=}  [(a \wedge b) \to (\sim b)] \wedge [(a \wedge b) \to (\sim b)^*] $
		$\overset{  (\ref{290426_141}) 
		}{=}  [(a \wedge b) \to b] \wedge [(a \wedge b) \to b^*] $
		$\overset{  (\ref{300326_01}) 
		}{=}  1 \wedge [(a \wedge b) \to b^*] $
		$\overset{  
		}{=}  (a \wedge b) \to b^* $
		$\overset{  (\ref{290426_173}) 
		}{=}  (a \wedge b)^* $ 
		then
		\begin{equation} \label{290426_182}
			(x \wedge y) \to (\sim y) \approx (x \wedge y)^*.
		\end{equation}
		is satisfied in $\mathbf T$.
		
		Since $a \wedge b \leq a$ then $a \to c \leq (a \wedge b) \to c$ by (\ref{190326_04}). Thus, using (\ref{190326_02}), $\sim ((a \wedge b) \to c) \leq \sim (a \to c)$. Consequently $\sim ((a \wedge b) \to c)  \to \sim (a \to c) = 1$ by item (\ref{300326_01}). Now we will prove that
		\begin{equation} \label{290426_154}
			\mathbf T \models \sim ((x \wedge y) \to z)  \to \sim (x \to z) \approx 1.
		\end{equation}
		This item is proved as follows:
		
		%$$
		%\begin{array}{lcll}
		%(a \wedge \sim b) \to \sim (a \to b)	& = &  \sim (a \wedge \sim b)^* \to \sim (a \to b)& \mbox{by (\ref{290426_157})} \\
		%	& = & \sim [(a \wedge \sim b) \to (\sim \sim b)] \to \sim (a \to b) & \mbox{by (\ref{290426_182})} \\
		%	& = & \sim [(a \wedge \sim b) \to b] \to \sim (a \to b) & \mbox{by (R\ref{dobleneg})} \\
		%	& = & 1 & \mbox{by (\ref{290426_154})} 
		%\end{array}
		%$$
		\noindent $(a \wedge \sim b) \to \sim (a \to b)	$
		$\overset{  (\ref{290426_157}) 
		}{=}   \sim (a \wedge \sim b)^* \to \sim (a \to b)$
		$\overset{  (\ref{290426_182}) 
		}{=}  \sim [(a \wedge \sim b) \to (\sim \sim b)] \to \sim (a \to b) $
		$\overset{  (R\ref{dobleneg}) 
		}{=}  \sim [(a \wedge \sim b) \to b] \to \sim (a \to b) $
		$\overset{  (\ref{290426_154}) 
		}{=}  1 $.

	\end{itemize}	
\end{proof}

The following result is an immediate consequence of Lemmas \ref{lemaDistrib} and \ref{otherconditions}.

\begin{theorem}
$\RNA = \mathcal R$.
\end{theorem}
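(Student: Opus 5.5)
We prove the equality of the two varieties by showing that each defining set of identities implies the other. Both varieties have the same signature, so it suffices to check that every R-algebra is an R-Nelson algebra and conversely.

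For the inclusion $\mathcal R \subseteq \RNA$, let $\mathbf T$ be an R-algebra. By Lemma \ref{lemaDistrib}, $\langle T,\we,\vee,0,1\rangle$ is a bounded distributive lattice. Together with (R\ref{dobleneg}) and (R\ref{negsup}), this shows that $\sim$ is an involutive De Morgan negation. Lemma \ref{otherconditions}(\ref{270426_18}) gives the Kleene identity $(x\we \sim x)\we (y \vee \sim y) \approx x\we \sim x$, so the $\{\ra\}$-free reduct is a Kleene algebra.

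The identities (R\ref{supremo})–(R\ref{au}) hold by hypothesis. The two extra identities of Definition \ref{generalised} are supplied by Lemma \ref{otherconditions}: item (\ref{190326_05}) gives a), $x\ra x\approx 1$, and item (\ref{190326_01}) gives b), $(x\we\s y)\ra\s(x\ra y)\approx 1$. Hence $\mathbf T\in\RNA$.

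For the reverse inclusion $\RNA\subseteq\mathcal R$, let $\mathbf T$ be an R-Nelson algebra. Identities (R\ref{supremo})–(R\ref{au}) are assumed. (R\ref{dobleneg}) and (R\ref{negsup}) are the first two Kleene axioms. The remaining identities (R\ref{lattice2}), (R\ref{top}) and (R\ref{bottom}) hold in any bounded distributive lattice: (R\ref{lattice2}) is distributivity combined with commutativity of $\we$ and $\vee$, while (R\ref{top}) and (R\ref{bottom}) express that $1$ is the top and $0$ the bottom. So $\mathbf T\in\mathcal R$.

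No step is a genuine obstacle, since all the substantial work is done in the two lemmas. The only point needing care is the bookkeeping: checking that the formulation of (R\ref{lattice2}), with its reversed order of meets and joins, is indeed valid in every distributive lattice, and that each identity of Definition \ref{generalised} is accounted for exactly once.
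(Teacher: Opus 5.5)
Your proposal is correct and follows the paper's argument: the paper presents the theorem as an immediate consequence of Lemma \ref{lemaDistrib} (bounded distributive lattice) and Lemma \ref{otherconditions}. In particular, items (\ref{270426_18}), (\ref{190326_05}) and (\ref{190326_01}) supply the Kleene identity and identities a) and b), and the converse inclusion $\RNA\subseteq\mathcal R$ is the routine check you describe.
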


\section{Independence of the identities of $\mathcal R$}

In this section, we show that the identities given in Definition \ref{def_R} are independent. 
In order to establish this fact, we present several examples obtained using a computational tool \cite{prover9}.
\vspace{1pt}

We begin with the following examples.

\begin{itemize}

	\item \noindent $\mathbf A_{\ref{lattice2}} = \langle \{0,1\}; \wedge, \vee, \to, \sim, 0, 1 \rangle$ where 

%\noindent \begin{minipage}{0.5 \textwidth}
	\begin{tabular}{r|r|r|}
		$\sim$ & 0 & 1 \\ \hline
		& 0 & 1
	\end{tabular} \hspace{.5cm}
	\begin{tabular}{r|r|r|}
		$\to$ & 0 & 1 \\ \hline
		0 & 1 & 1 \\ \hline
		1 & 1 & 1
	\end{tabular} \hspace{.5cm}
%\end{minipage}
%\begin{minipage}{0.8 \textwidth}
	\begin{tabular}{r|r|r|}
		$\wedge$ & 0 & 1 \\ \hline
		0 & 0 & 0 \\ \hline
		1 & 1 & 1
	\end{tabular} \hspace{.5cm}
	\begin{tabular}{r|r|r|}
		$\vee$ & 0 & 1 \\ \hline
		0 & 0 & 0 \\ \hline
		1 & 1 & 1
	\end{tabular}
%\end{minipage}

\item \noindent $\mathbf A_{\ref{top}} = \langle \{0,b,1\}; \wedge, \vee, \to, \sim, 0, 1 \rangle$ where

%\noindent \begin{minipage}{0.5 \textwidth}
	\begin{tabular}{r|r|r|r|}
		$\sim$ & 0 & 1 & b \\ \hline
		& b & 1 & 0
	\end{tabular} \hspace{.5cm}
	\begin{tabular}{r|r|r|r|}
		$\to$ & 0 & 1 & b \\ \hline
		0 & 1 & 1 & 1 \\ \hline
		1 & 1 & 1 & 1 \\ \hline
		b & 0 & 1 & 1
	\end{tabular} \hspace{.5cm}
%\end{minipage}
%\begin{minipage}{0.8 \textwidth}
	\begin{tabular}{r|r|r|r|}
		$\wedge$ & 0 & 1 & b \\ \hline
		0 & 0 & 0 & 0 \\ \hline
		1 & 0 & 1 & 1 \\ \hline
		b & 0 & 1 & b
	\end{tabular} \hspace{.5cm}
	\begin{tabular}{r|r|r|r|}
		$\vee$ & 0 & 1 & b \\ \hline
		0 & 0 & 1 & b \\ \hline
		1 & 1 & 1 & b \\ \hline
		b & b & b & b
	\end{tabular}
%\end{minipage}

\item \noindent $\mathbf A_{\ref{bottom}} = \langle \{0,b,1\}; \wedge, \vee, \to, \sim, 0, 1 \rangle$ where 

%\noindent \begin{minipage}{0.5 \textwidth}
	\begin{tabular}{r|r|r|r|}
		$\sim$ & 0 & 1 & b \\ \hline
		& 0 & b & 1
	\end{tabular} \hspace{.5cm}
	\begin{tabular}{r|r|r|r|}
		$\to$ & 0 & 1 & b \\ \hline
		0 & 1 & 1 & 1 \\ \hline
		1 & 0 & 1 & b \\ \hline
		b & 1 & 1 & 1
	\end{tabular} \hspace{.5cm}
%\end{minipage}
%\begin{minipage}{0.8 \textwidth}
	\begin{tabular}{r|r|r|r|}
		$\wedge$ & 0 & 1 & b \\ \hline
		0 & 0 & 0 & b \\ \hline
		1 & 0 & 1 & b \\ \hline
		b & b & b & b
	\end{tabular} \hspace{.5cm}
	\begin{tabular}{r|r|r|r|}
		$\vee$ & 0 & 1 & b \\ \hline
		0 & 0 & 1 & 0 \\ \hline
		1 & 1 & 1 & 1 \\ \hline
		b & 0 & 1 & b
	\end{tabular}
%\end{minipage}

\item \noindent $\mathbf A_{\ref{dobleneg}} = \langle \{0,1\}; \wedge, \vee, \to, \sim, 0, 1 \rangle$ where

%\noindent \begin{minipage}{0.5 \textwidth}
	\begin{tabular}{r|r|r|}
		$\sim$ & 0 & 1 \\ \hline
		& 1 & 1
	\end{tabular} \hspace{.5cm}
	\begin{tabular}{r|r|r|}
		$\to$ & 0 & 1 \\ \hline
		0 & 1 & 1 \\ \hline
		1 & 1 & 1
	\end{tabular} \hspace{.5cm}
%\end{minipage}
%\begin{minipage}{0.8 \textwidth}
	\begin{tabular}{r|r|r|}
		$\wedge$ & 0 & 1 \\ \hline
		0 & 0 & 0 \\ \hline
		1 & 0 & 1
	\end{tabular} \hspace{.5cm}
	\begin{tabular}{r|r|r|}
		$\vee$ & 0 & 1 \\ \hline
		0 & 0 & 1 \\ \hline
		1 & 1 & 1
	\end{tabular}
%\end{minipage}

\item \noindent $\mathbf A_{\ref{negsup}} = \langle \{0,1\}; \wedge, \vee, \to, \sim, 0, 1 \rangle$ where

%\noindent \begin{minipage}{0.5 \textwidth}
	\begin{tabular}{r|r|r|}
		$\sim$ & 0 & 1 \\ \hline
		& 0 & 1
	\end{tabular} \hspace{.5cm}
	\begin{tabular}{r|r|r|}
		$\to$ & 0 & 1 \\ \hline
		0 & 1 & 1 \\ \hline
		1 & 1 & 1
	\end{tabular} \hspace{.5cm}
%\end{minipage}
%\begin{minipage}{0.8 \textwidth}
	\begin{tabular}{r|r|r|}
		$\wedge$ & 0 & 1 \\ \hline
		0 & 0 & 0 \\ \hline
		1 & 0 & 1
	\end{tabular} \hspace{.5cm}
	\begin{tabular}{r|r|r|}
		$\vee$ & 0 & 1 \\ \hline
		0 & 0 & 1 \\ \hline
		1 & 1 & 1
	\end{tabular}
%\end{minipage}

\item \noindent $\mathbf A_{\ref{supremo}} = \langle \{0,b,c,1\}; \wedge, \vee, \to, \sim, 0, 1 \rangle$ where

%\noindent \begin{minipage}{0.5 \textwidth}
	\begin{tabular}{r|r|r|r|r|}
		$\sim$ & 0 & 1 & b & c \\ \hline
		& 1 & 0 & b & c
	\end{tabular} \hspace{.5cm}
	\begin{tabular}{r|r|r|r|r|}
		$\to$ & 0 & 1 & b & c \\ \hline
		0 & 1 & 1 & 1 & 1 \\ \hline
		1 & 0 & 1 & b & c \\ \hline
		b & 1 & 1 & 1 & 1 \\ \hline
		c & 1 & 1 & 1 & 1
	\end{tabular} \hspace{.5cm}
%\end{minipage}
%\begin{minipage}{0.8 \textwidth}
	\begin{tabular}{r|r|r|r|r|}
		$\wedge$ & 0 & 1 & b & c \\ \hline
		0 & 0 & 0 & 0 & 0 \\ \hline
		1 & 0 & 1 & b & c \\ \hline
		b & 0 & b & b & 0 \\ \hline
		c & 0 & c & 0 & c
	\end{tabular} \hspace{.5cm}
	\begin{tabular}{r|r|r|r|r|}
		$\vee$ & 0 & 1 & b & c \\ \hline
		0 & 0 & 1 & b & c \\ \hline
		1 & 1 & 1 & 1 & 1 \\ \hline
		b & b & 1 & b & 1 \\ \hline
		c & c & 1 & 1 & c
	\end{tabular}
%\end{minipage}

\item \noindent $\mathbf A_{\ref{infimo}} = \langle \{0,b,c,d,e,1\}; \wedge, \vee, \to, \sim, 0, 1 \rangle$ where

%\noindent \begin{minipage}{0.5 \textwidth}
	\begin{tabular}{r|r|r|r|r|r|r|}
		$\sim$ & 0 & 1 & b & c & d & e \\ \hline
		& 1 & 0 & c & b & e & d
	\end{tabular} \hspace{.5cm}
	\begin{tabular}{r|r|r|r|r|r|r|}
		$\to$ & 0 & 1 & b & c & d & e \\ \hline
		0 & 1 & 1 & 1 & 1 & 1 & 1 \\ \hline
		1 & 0 & 1 & b & c & d & e \\ \hline
		b & d & 1 & 1 & c & d & 1 \\ \hline
		c & b & 1 & b & 1 & e & e \\ \hline
		d & 1 & 1 & 1 & 1 & 1 & 1 \\ \hline
		e & d & 1 & 1 & c & d & 1
	\end{tabular} \hspace{.5cm}

%\end{minipage}
%\begin{minipage}{0.8 \textwidth}
	\begin{tabular}{r|r|r|r|r|r|r|}
		$\wedge$ & 0 & 1 & b & c & d & e \\ \hline
		0 & 0 & 0 & 0 & 0 & 0 & 0 \\ \hline
		1 & 0 & 1 & b & c & d & e \\ \hline
		b & 0 & b & b & 0 & 0 & b \\ \hline
		c & 0 & c & 0 & c & d & d \\ \hline
		d & 0 & d & 0 & d & d & d \\ \hline
		e & 0 & e & b & d & d & e
	\end{tabular} \hspace{.5cm}
	\begin{tabular}{r|r|r|r|r|r|r|}
		$\vee$ & 0 & 1 & b & c & d & e \\ \hline
		0 & 0 & 1 & b & c & d & e \\ \hline
		1 & 1 & 1 & 1 & 1 & 1 & 1 \\ \hline
		b & b & 1 & b & 1 & e & e \\ \hline
		c & c & 1 & 1 & c & c & 1 \\ \hline
		d & d & 1 & e & c & d & e \\ \hline
		e & e & 1 & e & 1 & e & e
	\end{tabular}
%\end{minipage}

\item \noindent $\mathbf A_{\ref{trans}} = \langle \{0,b,1\}; \wedge, \vee, \to, \sim, 0, 1 \rangle$ where

%\noindent \begin{minipage}{0.5 \textwidth}
	\begin{tabular}{r|r|r|r|}
		$\sim$ & 0 & 1 & b \\ \hline
		& 1 & 0 & b
	\end{tabular} \hspace{.5cm}
	\begin{tabular}{r|r|r|r|}
		$\to$ & 0 & 1 & b \\ \hline
		0 & 1 & 1 & 1 \\ \hline
		1 & 0 & 1 & b \\ \hline
		b & b & 1 & 1
	\end{tabular} \hspace{.5cm}
%\end{minipage}
%\begin{minipage}{0.5 \textwidth}
	\begin{tabular}{r|r|r|r|}
		$\wedge$ & 0 & 1 & b \\ \hline
		0 & 0 & 0 & 0 \\ \hline
		1 & 0 & 1 & b \\ \hline
		b & 0 & b & b
	\end{tabular} \hspace{.5cm}
	\begin{tabular}{r|r|r|r|}
		$\vee$ & 0 & 1 & b \\ \hline
		0 & 0 & 1 & b \\ \hline
		1 & 1 & 1 & 1 \\ \hline
		b & b & 1 & b
	\end{tabular}
%\end{minipage}

\item \noindent $\mathbf A_{\ref{inf}} = \langle \{0,1\}; \wedge, \vee, \to, \sim, 0, 1 \rangle$ where 

%\noindent \begin{minipage}{0.5 \textwidth}
	\begin{tabular}{r|r|r|}
		$\sim$ & 0 & 1 \\ \hline
		& 0 & 1
	\end{tabular} \hspace{.5cm}
	\begin{tabular}{r|r|r|}
		$\to$ & 0 & 1 \\ \hline
		0 & 1 & 1 \\ \hline
		1 & 1 & 1
	\end{tabular} \hspace{.5cm}
%\end{minipage}
%\begin{minipage}{0.5 \textwidth}
	\begin{tabular}{r|r|r|}
		$\wedge$ & 0 & 1 \\ \hline
		0 & 0 & 0 \\ \hline
		1 & 0 & 1
	\end{tabular} \hspace{.5cm}
	\begin{tabular}{r|r|r|}
		$\vee$ & 0 & 1 \\ \hline
		0 & 0 & 0 \\ \hline
		1 & 0 & 1
	\end{tabular}
%\end{minipage}

\item \noindent $\mathbf A_{\ref{au}} = \langle \{0,b,1\}; \wedge, \vee, \to, \sim, 0, 1 \rangle$ where 

%\noindent \begin{minipage}{0.5 \textwidth}
	\begin{tabular}{r|r|r|r|}
		$\sim$ & 0 & 1 & b \\ \hline
		& 1 & 0 & b
	\end{tabular} \hspace{.5cm}
	\begin{tabular}{r|r|r|r|}
		$\to$ & 0 & 1 & b \\ \hline
		0 & 1 & 1 & 1 \\ \hline
		1 & 0 & 1 & 0 \\ \hline
		b & 0 & 1 & 0
	\end{tabular} \hspace{.5cm}
%\end{minipage}
%\begin{minipage}{0.8 \textwidth}
	\begin{tabular}{r|r|r|r|}
		$\wedge$ & 0 & 1 & b \\ \hline
		0 & 0 & 0 & 0 \\ \hline
		1 & 0 & 1 & b \\ \hline
		b & 0 & b & b
	\end{tabular} \hspace{.5cm}
	\begin{tabular}{r|r|r|r|}
		$\vee$ & 0 & 1 & b \\ \hline
		0 & 0 & 1 & b \\ \hline
		1 & 1 & 1 & 1 \\ \hline
		b & b & 1 & b
	\end{tabular}
%\end{minipage}

\end{itemize}

It can be proved that each algebra $\mathbf A_j$ satisfies the axiom $\mathrm{(Rk)}$ of Definition \ref{def_R} for 
$k \not= j$, but does not satisfy $\mathrm{(Rj)}$. Therefore, we obtain the following result.

\begin{lemma}
The identities appearing in Definition \ref{def_R} are independent.
\end{lemma}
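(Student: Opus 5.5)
The plan is the standard independence argument. For each $j\in\{1,\dots,10\}$ it suffices to exhibit an algebra of type $(2,2,2,1,0,0)$ that satisfies every identity (R$k$) with $k\neq j$ but fails (R$j$). Then (R$j$) is not a consequence of the remaining identities, since any equational consequence of them would hold in that algebra. The candidates are the algebras $\mathbf A_j$ listed above, so the proof reduces to checking finitely many finite tables.

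\textbf{Step 1 (failure of (R$j$) in $\mathbf A_j$).} For each $j$ I will give a concrete assignment witnessing that (R$j$) fails; these are short computations.
\begin{itemize}
\item In $\mathbf A_{1}$, $\wedge$ and $\vee$ are both the left projection. Taking $x=0,y=1,z=1$ gives $x\wedge(y\vee z)=0$, while $(z\wedge x)\vee(y\wedge x)=1$.
\item In $\mathbf A_{2}$, $b\wedge 1=1\neq b$.
\item In $\mathbf A_{3}$, $0\wedge b=b\neq 0$.
\item In $\mathbf A_{4}$, $\sim\sim 0=1\neq 0$.
\item In $\mathbf A_{5}$, $\sim$ is the identity, so $\sim(0\wedge 1)=0$, whereas $\sim 0\vee\sim 1=1$.
\item In $\mathbf A_{6}$, $(b\vee c)\to 0=1\to 0=0$, whereas $(b\to 0)\wedge(c\to 0)=1$.
\item In $\mathbf A_{7}$, a search over triples (for instance $z\in\{b,c,e\}$) finds a failure of distribution of $\to$ over $\wedge$ in the second argument.
\item In $\mathbf A_{8}$, take $x=1,y=b,z=0$: $(1\to b)\wedge(b\to 0)=b$ and $1\to 0=0$, so the left side of (R8) is $b\to 0=b\neq 1$.
\item In $\mathbf A_{9}$, $\vee$ is the left projection. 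With $x=1,y=0$ we get $1\wedge(1\to 0)=1$ and $1\wedge(\sim 1\vee 0)=1\wedge 1=1$, so that assignment does not refute (R9). I will search the four assignments until one fails; one should, since the algebra was produced by Prover9/Mace4.
\item In $\mathbf A_{10}$, take $x=b,y=b$: $\sim(b\to b)=\sim 0=1$ and $b\wedge\sim b=b$, so $1\to b=0\neq 1$.
\end{itemize}

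\textbf{Step 2 (validity of the other nine identities in $\mathbf A_j$).} This is the bulk of the work. I will organize it to cut down the cases.
\begin{itemize}
\item For $j\in\{6,7,8,10\}$, the reduct $\langle A,\wedge,\vee,\sim,0,1\rangle$ is an honest Kleene algebra: the chain $0<b<1$ with $\sim b=b$; the four-element Boolean algebra with $\sim b=b$, $\sim c=c$ (check De Morgan on the diamond); and, for $\mathbf A_7$, a six-element distributive lattice with an involution. So (R1)--(R5) follow at once from the known facts, and only the $\to$-identities need checking, by evaluating them over all tuples.
\item For $j\in\{1,\dots,5,9\}$, the $\to$ tables are mostly constant $1$ except on a few entries. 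Then (R6), (R7), (R8), (R10) collapse to trivially true statements, or to few cases, and the remaining lattice/negation axioms are checked directly on two or three elements.
\end{itemize}

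The main obstacle is the sheer verification for $\mathbf A_7$, which has $6^3=216$ triples per identity. There I would argue structurally: the lattice is distributive and $\sim$ is a De Morgan involution. For (R8) and (R9) I would use monotonicity patterns visible in the $\to$ table, and otherwise rely on the mechanical check, which is what the cited computational tool provides. I would state explicitly that these verifications were carried out exhaustively with Prover9/Mace4. I would also double-check suspicious tables, such as the $\wedge$ of $\mathbf A_{2}$, $\mathbf A_3$, and the non-commutative entries of $\mathbf A_1$, against the intended models, since a typo there would break the argument.
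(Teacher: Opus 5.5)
Your strategy is the paper's: take the listed algebras $\mathbf A_j$, show that each fails (R$j$) and satisfies the other nine identities, and leave the bulk of the table-checking to the computer search, as the paper does. Most of your refuting assignments are correct. The step for $\mathbf A_9$, however, does not go through as written.

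\textbf{The $\mathbf A_9$ step.} You misread its $\vee$ table. It is not the left projection; it coincides with $\wedge$ ($0\vee 1=1\vee 0=0$, $1\vee 1=1$). Under your reading, both sides of (R9) reduce to $x$, so your search over the four assignments would find no counterexample. Under that reading (R1) would also fail, e.g.\ at $x=y=1$, $z=0$, so $\mathbf A_9$ would not even be a model of the other axioms. With the correct table, your own assignment $x=1$, $y=0$ does refute (R9): $1\wedge(1\to 0)=1$, while $1\wedge(\sim 1\vee 0)=1\wedge 0=0$.

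There is a further point you must settle explicitly. Since $\vee=\wedge$ in $\mathbf A_9$, the two ways of turning the inequality (R9) into an equation are not equivalent there. Write $s=x\wedge(x\to y)$ and $t=x\wedge(\sim x\vee y)$, so $s=x$ and $t=x\wedge y$ in $\mathbf A_9$.
\begin{itemize}
\item Read as $s\approx s\wedge t$, (R9) fails at $(1,0)$.
\item Read as $t\approx s\vee t$, it holds identically, since $s\vee t=x\wedge(x\wedge y)=t$. Then $\mathbf A_9$ satisfies all ten identities and witnesses nothing.
\end{itemize}
So you must fix (R9) to be the $\wedge$-form. This is also the form the paper uses whenever it applies (R9) in the proof of Lemma \ref{lemaDistrib}. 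The paper's remark that the two forms are equivalent is valid only in lattices, and $\mathbf A_9$ is not one. In the only other non-lattice, $\mathbf A_1$, both forms hold, so nothing else is affected.

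\textbf{Two smaller corrections.} First, the reduct of $\mathbf A_6$ is a De Morgan algebra but not a Kleene algebra, since $b\wedge\sim b=b\not\leq c=c\vee\sim c$. This is harmless, because (R1)--(R5) only need a bounded distributive lattice with a De Morgan involution, but your claim as stated is false. Second, the reduct of $\mathbf A_7$ is $\mathbf 2\times\mathbf 3$ with the product involution, so it is genuinely Kleene. An explicit failure of (R7) there is $z=x=b$, $y=c$: $b\to(b\wedge c)=b\to 0=d$, while $(b\to b)\wedge(b\to c)=1\wedge c=c$.
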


\section*{Acknowledgements} We thank Ignacio Viglizzo for some discussions in the early stages of this research.

%%%%%%%%%%%%%%%%%%%%%%%%%%%%%%%%%%%%%%%%%%%%%%%
%\newpage
-----------------------------------------------------------------------------------------
%\newpage
%\begin{multicols}{3}
	\begin{enumerate}
		\item[] Juan Manuel Cornejo,\\
		Departamento de Matem\'atica \\
		(Universidad Nacional del Sur) and \\
		INMABB (CONICET).\\
		Bah\'ia Blanca (8000), Argentina.\\
		jmcornejo@.uns.edu.ar
		
		\item[] Paula Soledad Helt,\\
		Departamento de Matem\'atica \\
		(Universidad Nacional del Sur) and \\
		INMABB (CONICET).\\
		Bah\'ia Blanca (8000), Argentina.\\
		paulasoledadhelt@gmail.com
		
		\item[] Hern\'an Javier San Mart\'in,\\
		Centro de Matem\'atica de La Plata (CMaLP), \\
		Facultad de Ciencias Exactas (UNLP), \\
		and CONICET.\\
		La Plata (1900), Argentina.\\
		hsanmartin@mate.unlp.edu.ar
	\end{enumerate}
%\end{multicols}

\end{document}